\numberwithin{equation}{section}
\numberwithin{figure}{section}
\theoremstyle{plain}
\newtheorem{thm}{\protect\theoremname}
\theoremstyle{remark}
\newtheorem*{rem*}{\protect\remarkname}
\theoremstyle{plain}
\newtheorem{rem}{\protect\remarkname}
\theoremstyle{plain}
\newtheorem{lem}[thm]{\protect\lemmaname}
\theoremstyle{plain}
\newtheorem{prop}[thm]{\protect\propositionname}
\theoremstyle{plain}
\theoremstyle{plain}
\theoremstyle{plain}
\providecommand{\lemmaname}{Lemma}
\providecommand{\propositionname}{Proposition}
\providecommand{\theoremname}{Theorem}
\providecommand{\corollaryname}{Corollary}
\providecommand{\remarkname}{Remark}
\newcommand{\abs}[1]{\ensuremath{|#1|}}
\newcommand{\Abs}[1]{\ensuremath{\left|#1\right|}}
\newcommand{\norm}[2]{\ensuremath{|\!|#1|\!|_{#2}}}
\newcommand{\Norm}[2]{\ensuremath{\left|\!\left|#1\right|\!\right|_{#2}}}
\newcommand{\braket}[2]{\langle #1 | #2 \rangle}
\renewcommand{\d}[1]{\ensuremath{\textnormal{d}#1}}
\newcommand{\cC}{\mathcal{C}}
\newcommand{\cE}{\mathcal{E}}
\newcommand{\cI}{\mathcal{I}}
\newcommand{\cO}{\mathcal{O}}
\DeclareMathOperator\Log{Log}
\DeclareMathOperator\Arg{Arg}
\providecommand{\lemmaname}{Lemma}
\providecommand{\propositionname}{Proposition}
\providecommand{\remarkname}{Remark}
\providecommand{\theoremname}{Theorem}
\tikzset{
  reversed with radius/.style={
    x radius=#1,
    y radius=-#1,
 }
}
\tikzset{
  with arrows/.style={
    decoration={ markings,
      mark=between positions #1 and .999 step #1 with {\arrow{stealth}}
    }, postaction={decorate}
  }, with arrows/.default=25mm,
}
\begin{document}

\title{On the asymptotic behavior of Jacobi polynomials with first varying
parameter}

\author{Oleg Szehr}

\address{Centre for Quantum Information, University of Cambridge, Cambridge
CB3 0WA, United Kingdom.}

\address{Dalle Molle Institute for Artificial Intelligence (IDSIA) - SUPSI/USI,
Manno, Switzerland}

\email{oleg.szehr@idsia.ch}

\author{Rachid Zarouf}

\address{Aix-Marseille University, ADEF, University campus of Saint-J\'er\^ome, 52 Avenue
Escadrille Normandie Niemen, 13013 Marseille}


\email{rachid.zarouf@univ-amu.fr}

\address{Department of Mathematics and Mechanics, Saint Petersburg State University,
28, Universitetski pr., St. Petersburg, 198504, Russia.}

\thanks{The work is supported by the project ANR 18-CE40-0035}

\email{rzarouf@gmail.com}

\keywords{Jacobi polynomials, Integral representation, Method of stationary
phase, Laplace's method. \\
 2010 Mathematics Subject Classification: Primary: 33C45; Secondary:
30E15}
\begin{abstract}
We investigate the large $n$ behavior of Jacobi polynomials with varying parameters $P_{n}^{(an+\alpha,\,bn+\beta)}(1-2\lambda^{2})$ for $a,b >-1$ and $\lambda\in(0,\,1)$. This is a well-studied topic in the literature but some of the published results appear to be discordant. To address this issue we provide an in-depth investigation of the case $b = 0$, which is most relevant for our applications. Our approach is based on a new and surprisingly simple representation
of $P_{n}^{(an+\alpha,\,\beta)}(1-2\lambda^{2}),\:a>-1$ in terms
of two integrals. The integrals' asymptotic behavior is studied using
standard tools of asymptotic analysis: one is a Laplace integral and
the other is treated via the method of stationary phase. As a consequence
we prove that if $a\in(\frac{2\lambda}{1-\lambda},\infty)$ then $\lambda^{an}P_{n}^{(an+\alpha,\beta)}(1-2\lambda^{2})$
shows exponential decay and we derive simple exponential upper bounds
in this region.~If~$a\in(\frac{-2\lambda}{1+\lambda},\,\frac{2\lambda}{1-\lambda})$
then the decay of $\lambda^{an}P_{n}^{(an+\alpha,\beta)}(1-2\lambda^{2})$
is $\cO(n^{-1/2})$ and if $a\in\{\frac{-2\lambda}{1+\lambda},\,\frac{2\lambda}{1-\lambda}\}$
then $\lambda^{an}P_{n}^{(an+\alpha,\beta)}(1-2\lambda^{2})$ decays
as $\cO(n^{-1/3})$.~A~new phenomenon occurs in the parameter range
$a\in(-1,\frac{-2\lambda}{1+\lambda})$, where we find that the behavior
depends on whether or not $an+\alpha$ is an integer: If~$a\in(-1,\frac{-2\lambda}{1+\lambda})$
and $an+\alpha$ is an integer then $\lambda^{an}P_{n}^{(an+\alpha,\beta)}(1-2\lambda^{2})$
decays exponentially.~If $a\in(-1,\frac{-2\lambda}{1+\lambda})$
and $an+\alpha$ is not an integer then $\lambda^{an}P_{n}^{(an+\alpha,\beta)}(1-2\lambda^{2})$
may increase exponentially depending on the proximity of the sequence $(an + \alpha)_n$ to integers.

\end{abstract}

\maketitle

\section{\label{sub:Introduction}Introduction}

The Jacobi polynomials $P_{n}^{(\alpha,\,\beta)}$ (occasionally also
called hypergeometric polynomials) constitute a wide class of classical polynomials.
They include the Gegenbauer and thus also the Legendre, Zernike and
Chebyshev polynomials as special cases. For $\alpha,\beta\in\mathbb{R}$
and $x\in(-1,1)$ the Jacobi polynomials of degree $n$ can be defined~\cite[p.~68]{GZ} by
\begin{align*}
P_{n}^{(\alpha,\,\beta)}(x)=\sum_{\mu=0}^{n}\binom{n+\alpha}{n-\mu}\binom{n+\beta}{\mu}\left(\frac{x-1}{2}\right)^{\mu}\left(\frac{x+1}{2}\right)^{n-\mu}.
\end{align*}
This article is concerned with the approximation of large degree Jacobi
polynomials. While for fixed parameters $\alpha,\beta$ this is a classical
topic of asymptotic analysis and approximation theory~\cite{WW,GZ}, this work is concerned with the more general Jacobi polynomials with varying parameters (JPVPs)
\[
P_{n}^{(an+\alpha,bn+\beta)}(x),\qquad{a,b,\alpha,\beta>-1}.
\]
The asymptotic behavior of JPVPs has been the content of multiple preceeding publications including~\cite{BG,CI,FFN,GS,SI,SV,CIR}. We have observed that some of the results are discordant, which led us to search for a simple and robust approach to the topic. The articles~\cite{CIR,SI,SV} investigate the case $b = 0$, i.e.~only the first parameter of JPVPs depends on $n$. We call such polynomials Jacobi polynomials with first varying parameter (JPFVPs). Our presentation
is streamlined to this case because it is the most relevant for our applications and to allow for an immediate comparison to~\cite{CIR,SI,SV}.
At the core
of our investigation lies a new representation of Jacobi polynomials in terms
of two integrals. The representation is chosen such that the integrals'
asymptotic behavior can be determined by means of well-established
methods. The first integral is a generalized Fourier integral, whose
asymptotic behavior follows from Erdélyi's classical method of stationary
phase~\cite{AE}. The second integral is of Laplace type: its asymptotic
behavior is determined by the classical method of Laplace~\cite[Section 6.4]{BO}.
Thus we introduce a new and comparatively simple approach to the topic.
Based on our integral representation we systematically determine
the asymptotic behavior of JPFVPs and we explain how
our method extends to the general case of JPVPs. The theoretical investigation is accompanied with extensive numerical experiments.

We find that the behavior of JPFVPs is non-trivial in the sense that different types of asymptotics occur as $n\rightarrow\infty$. To state the results it is convenient to change the variable of the polynomials writing $x=1-2\lambda^2$ with $\lambda\in(0,1)$. As we will prove
in the main body of this article there are four distinct parameter regions, 1)~$a\in(-1,\frac{-2\lambda}{1+\lambda})$,
2)~$a\in(-\frac{2\lambda}{1+\lambda},\,\frac{2\lambda}{1-\lambda})$,
3) $a\in(\frac{2\lambda}{1-\lambda},\infty)$ and 4) $a\in\left\{ -\frac{2\lambda}{1+\lambda},\,\frac{2\lambda}{1-\lambda}\right\} $
with categorically different asymptotics. The below listing summarizes
our findings in the respective cases and compares them to the literature. 
\begin{enumerate}
\item The case $a\in(-1,\frac{2\lambda}{1-\lambda})$ has been studied in
\cite{CI} and \cite{SI}. According to \cite{SI} the results of
\cite{CI} must be corrected. \cite{SI} claims that if $a\in(-1,\frac{2\lambda}{1-\lambda})$
then the decay of $\lambda^{an}P_{n}^{(an+\alpha,\,\beta)}(1-2\lambda^{2})$
is $\cO\left(n^{-1/2}\right)$. Our result, see Theorem~\ref{gbar}
below, contradicts both \cite{CI} and \cite{SI}. For $a\in(-1,-\frac{2\lambda}{1+\lambda})$
we identify a new form of asymptotic behavior: 
\begin{enumerate}
\item If $an+\alpha$ is an integer then $\lambda^{an}P_{n}^{(an+\alpha,\,\beta)}(1-2\lambda^{2})$
decays exponentially in magnitude. 
\item If $an+\alpha$  remains strictly separate from integers on a subsequence of natural numbers then
\[
\lambda^{an}P_{n}^{(an+\alpha,\beta)}(1-2\lambda^{2})
\]
increases exponentially in magnitude along this subsequence.


%
%
%
%
%
\end{enumerate}
\item It is shown in~\cite{GS,FFN} that if $a\in\left[0,\frac{2\lambda}{1-\lambda}\right)$
then $\lambda^{an}P_{n}^{(an+\alpha,\beta)}(1-2\lambda^{2})$ decays
in first order like $\cO\left(n^{-1/2}\right)$. The wider interval
$a\in(-\frac{2\lambda}{1+\lambda},\,\frac{2\lambda}{1-\lambda})$
is studied in~\cite{SI,CI}, where a decay of $\cO\left(n^{-1/2}\right)$
is found. However, the explicit formulas of~\cite{SI,CI} and~\cite{GS,FFN}
are inconsistent. In this range of parameters our findings are identical
to those of \cite[Theorem 1]{GS} and to \cite[Proposition 6.1]{FFN} (but notice the different $\cO$-term, see Remark~\ref{oscilatingRemark}). 
\item For $a>\frac{2\lambda}{1-\lambda}$ we find that $\lambda^{an}P_{n}^{(an+\alpha,\,\beta)}(1-2\lambda^{2})$
decays exponentially which is in line with \cite{SI,CI}, see Theorem
\ref{gbar} points (3) and (4) for details. 
\item For $a\in\left\{ -\frac{2\lambda}{1+\lambda},\,\frac{2\lambda}{1-\lambda}\right\} $
we find that $\lambda^{an}P_{n}^{(an+\alpha,\,\beta)}(1-2\lambda^{2})$
decays in first order like $\cO(n^{-1/3})$. Our asymptotic formulas,
see Theorem~\ref{gbar} point (2) for details, differ from \cite[ Formula (3.9)]{CI}.
This case is not investigated in \cite{SI}. An asymptotic formula is also derived in~\cite{SV} for $a=\frac{2\lambda}{1-\lambda}$ but the printed formula differs from ours by a complex prefactor.
\end{enumerate}
Our results cannot be directly compared to those obtained in~\cite{KM,MO}
because the authors are interested in a different range of parameters.
\cite{KM}~focuses on parameters that satisfy the following two
conditions: $-1<a<0$, $-1<b<0$, $a+b<-1$ and $\alpha=\beta=0$. \cite{MO} treats the case that $a,\,b$
are not both positive and take values outside the triangle bounded by $a=0,\,b = 0$ and $a+b+2=0$ and $\alpha=\beta=0$.
But for negative values of $a, b$ their formulas witness a similar dependency on the proximity of $(a+b)n$ to an integer as the one we find in region 1), see for example~\cite[Theorem 2.6]{KM} and~\cite[Remark 2.9]{KM}.
Our interest in the asymptotic behavior of Jacobi polynomials arose from their relation to the Fourier coefficients of the Blaschke product
$B=b^{n}=\left(\frac{z-\lambda}{1-\lambda z}\right)^{n}$. In fact the $k^{th}$ Fourier coefficient of $B$ is given by
\begin{align}
\hat{B}(k)=\lambda^{k-n}(1-\lambda^{2})P_{n-1}^{(k-n,\,1)}(1-2\lambda^{2}),\label{diskAuto}
\end{align}
which is a consequence of Lemma~\ref{striking} below. Being the $n$-th power of the disk automorphism, $B$ plays a key role in the operator theory of holomorphic Banach spaces of the unit disk. Consider,
for instance, the classical Beurling-Sobolev space 
\[
l_{p}^{A}(w):=\left\{ f=\sum_{k\geq0}\hat{f}(k)z^{k}\in Hol(\mathbb{D})\:\Big|\:\Norm{f}{l_{p}^{A}(w)}:=\left(\sum_{k\geq0}\abs{\hat{f}(k)}^{p}w_{k}^{p}\right)^{1/p}<\infty\right\} ,
\]
where the sequence of weights $w=(w_{k})_{k\geq1}$ is such that $w_{k}>0$,
$\lim_{k}w_{k}^{1/k}=1$ \cite[p.  670]{NN2}. We write briefly $l_p^A$ for the special case of the analytic sequence space, $w=(1)_{k\geq1}$. The asymptotic $n$-dependency of $\Norm{B}{l_{p}^{A}(w)}$ is connected
to the asymptotics of JPFVPs via Equation~\eqref{diskAuto} because the asymptotic analysis requires the investigation of regions of Fourier coefficients where $k$ is proportional to $n$. The regions 1), 2), 3), 4) typically play different roles
in terms of their contribution to $\Norm{B}{l_{p}^{A}(w)}$. The region of exponential
decay of $\abs{\hat{B}(k)}$ adds an asymptotically negligible contribution. Simple exponential upper estimates are a therefore a useful tool to restrict the analysis to the relevant coefficients. In general, it will depend on $w$ and $p$, which region contributes the terms that dominate the asymptotic behavior.

We illustrate the relevance of information about the asymptotic behavior of $\Norm{B}{l_{p}^{A}(w)}$ by some examples. First, the asymptotic behavior of $\Norm{B}{l_{p}^{A}}$, $p\in[1,2]$, has been determined in~\cite{BS} to study the boundedness of composition operators
$(comp_{b}(f))(z):=f(b(z))$ on $l_{p}^{A}$. In similar spirit, information about $\Norm{B}{l_{p}^{A}\left(w\right)}$ has recently been exploited in~\cite{LLQRP} to identify the sequences $w=(w_{k})_{k\geq1}$,
for which the composition operator on the weighted Hardy space
$l_{2}^{A}(w)$ is bounded. We have computed the exact asymptotic $n$-dependency of $\Norm{B}{l_{p}^{A}}$ for $p\in[1,\infty]$ in~\cite{SZ2}. Second, $\Norm{B}{l_{p}^{A}(w)}$
plays an important role in interpolation theory: Consider the formal Hölder inequality
\[
\abs{\braket{f}{g}}\leq\Norm{f}{l_{q}^{A}}\Norm{g}{l_{p}^{A}}
\]
where $1/p+1/q=1$ and $\braket{\cdot}{\cdot}$ denotes the usual scalar product on $L^{2}(\partial\mathbb{D})$. 
For the Nevanlinna-Pick-type interpolation problem \cite{NN2} in $l_q^A$ with $(\lambda_i)_{i=1}^n\subset\mathbb{D}$,
\[
\cI=\inf\{\Norm{f}{l_{q}^{A}}: f(0)=1,\ f(\lambda_i)=0,\ i=1,...,n\},
\]
choosing $g=B$, where $B$ is the Blaschke product associated to the sequence $(\lambda_i)_{i=1}^n$,  entails the lower estimate
\[
\cI\geq\frac{1}{\Pi_{i=1}^n\abs{\lambda_i}\Norm{B}{l_{p}^{A}}}.
\]
%
In other words any $f$ that is admissible for $\cI$ grows at least as quickly as $1/({\Pi_{i=1}^n\abs{\lambda_i}\Norm{B}{l_{p}^{A}}})$. The same reasoning extends mutatis mutandis to the more general Beurling-Sobolev spaces $l_{p}^{A}(w)$.
Third, the norm $\Norm{B}{l_{p}^{A}(w)}$ occurs in extremal problems in matrix analysis. While an introduction to model theory would lead beyond the scope of this article (see~\cite{NN1} for details) we briefly illustrate the flavor of problems in this area. Suppose $A$ is a complex $n\times n$ matrix with spectrum $(\lambda_i)_{i=1}^n$, $\Norm{\cdot}{}$ an induced matrix norm with $\Norm{A}{}\leq1$, and let $\psi$ be any rational function, whose poles are disjoint from the eigenvalues of $A$. Consider the extremal problem
\begin{align*}
\cE=\sup\Norm{\psi(T)}{},
\end{align*}
where the supremum goes over all $T$ with $\Norm{T}{}\leq1$ that have the same minimal polynomial as $A$. Then it holds~\cite{NN2,SZ1}
\begin{align}\cI = \inf\{\norm{f}{l_1^A}\: |\:f(\lambda_i)=\psi(\lambda_i), i=1,...,n\}=\Norm{\psi(M)}{}\leq\cE,\label{extremal}
\end{align}
%
%
where $M$ is an operator model of $A$: $M$ has the same minimal polynomial as $A$ and $\Norm{M}{}\leq1$, too. Thus, the techniques to bound $\cI$ from below are also available for the estimation of $\cE$. This method has
been employed (for $\psi(z)=z^{-1}$) to introduce a constructive approach to a conjecture
of Schäffer~\cite{SZ1}. In similar vein, there is an open question of V. Pták \cite{You1,Pt2}
from the 1980's: Over all $n\times n$ complex matrices $A$ with
$\Norm{A}{2}\leq1$, where $\Norm{\cdot}{2}$ is the Hilbert space operator norm,
and with spectral radius bounded by $\lambda\in(0,1)$ determine the supremum of $\Norm{A^{n}}{2}$
as a function of $n$ and $\lambda$. It
is shown in \cite{Pt2} that the supremum is attained by the model operator whose spectrum $(\lambda_{i})_{i=1}^{n}$ is
fully degenerate at point $\lambda$.
As it turns out the entries of the $N$-th power of the model operator can be expressed (in Malmquist-Walsh basis \cite[p. 117]{NN1}) as
\begin{align*}
\left(M^{N}\right)_{i,j} = \begin{cases} (-\lambda)^{i-j-N}(1-\lambda^{2})P_{N-1}^{(i-j-N,1)}(1-2\lambda^{2}),\ &i> j,\\
\lambda^N,\ &i=j,\\
0,\ &i<j,
\end{cases}
\end{align*}
which relates Pták's question to the theory of JPFVPs by~\eqref{extremal}.

Section~\ref{sec:Main-result} contains our main results. Our insights
about the asymptotic behavior of JPFVPs are presented in Section~\ref{asylanticAnalysis}
and our new integral representation for JPFVPs in Section~\ref{newIntRep}.
Section~\ref{asymptInt} describes our methods to determine the asymptotic behavior of the involved integrals. The generalized Fourier
integral is treated in Section~\ref{sub:The-stationary-phase} and
the Laplace integral in Section~\ref{sub:Laplace's-method-for}.
Section~\ref{sec:Simple-exponential-bounds} provides simple exponential
estimates for the JPFVPs in the region of exponential decay/ growth. The proof of our main theorem is given in Section~\ref{proofSection}. Section~\ref{numericalExperiments}
contains the results of our numerical experiments. Building on our integral representation Section~\ref{sec:Final-remarks}
describes possible generalizations of our work. This includes the
case of JPVPs with $b\neq0$ and a description of the methodology to obtain
uniform asymptotic expansions as $a$ approaches the boundaries.

\section*{acknowledgments}

The authors are grateful to A. D. Baranov, A. A. Borichev, S. Charpentier, S. Kupin for their comments on an earlier version of our manuscript.

\section{Main results}

\label{sec:Main-result} 

\subsection{On the asymptotic behavior of Jacobi polynomials with first varying parameter}

\label{asylanticAnalysis} 
\begin{thm}
\label{gbar} Let $\alpha,\ \beta>-1$, $a>-1$ and $\lambda\in(0,\,1)$.
The following asymptotic formulas hold as $n\rightarrow\infty$: 
\begin{enumerate}
\item If $a\in(-\frac{2\lambda}{1+\lambda},\,\frac{2\lambda}{1-\lambda})$
then 
\begin{align*}
 & \lambda^{an+\alpha}P_{n}^{(an+\alpha,\,\beta)}(1-2\lambda^{2})=\sqrt{\frac{2}{n\pi}}\frac{((1-\lambda^{2})(a+1))^{-\frac{\beta}{2}}}{((1-\lambda^{2})((a+2)\lambda+a)((a+2)\lambda-a))^{\frac{1}{4}}}\\
 & \cdot\cos{\left((n+1)h(\varphi_{+})+(\alpha-a)\varphi_{+}+(\beta-1)\psi{+}\frac{\pi}{4}\right)}+ \cO\left(\frac{1}{n^{3/2}}\right).
\end{align*}
The phases $\varphi_{+},h(\varphi_{+}),\psi\in(-\pi,\pi]$ depend on
$a,\lambda$ and are given explicitly in Proposition~\ref{thm:delta_negative} below.
\item If $a=\frac{2\lambda}{1-\lambda}$ then 
\begin{align*}
\lambda^{an+\alpha}P_{n}^{(an+\alpha,\,\beta)}(1-2\lambda^{2})=\frac{(1+\lambda)^{-\beta}}{3^{2/3}\Gamma(2/3)n^{1/3}\lambda^{1/3}(1+\lambda)^{1/3}}\left(1+\cO\left(\frac{1}{n^{1/3}}\right)\right).
\end{align*}
If $a=-\frac{2\lambda}{1+\lambda}$ then 
\begin{align*}
 & \lambda^{an+\alpha}P_{n}^{(an+\alpha,\,\beta)}(1-2\lambda^{2})=\frac{(1-\lambda)^{-\beta}}{3^{2/3}\Gamma(2/3)\pi n^{1/3}\lambda^{1/3}(1-\lambda)^{1/3}}\\
 & \cdot\left(\cos\left((an+\alpha)\pi\right)-\sqrt{3}\sin\left((an+\alpha)\pi\right)\right)+ \cO\left(\frac{1}{n^{2/3}}\right).
\end{align*}
\item If $a\in(-1,\frac{-2\lambda}{1+\lambda})\cup(\frac{2\lambda}{1-\lambda},\infty)$
and $an+\alpha$ is an integer then the quantity 
\[
\lambda^{an+\alpha}P_{n}^{(an+\alpha,\,\beta)}(1-2\lambda^{2})
\]
decays exponentially in magnitude with $n$, see Proposition~\ref{Second_thm}
for an upper bound.
\item 
If $a\in(-1,-\frac{2\lambda}{1+\lambda})$ and if the sequence $(an+\alpha)_n$ is separate from integers then the quantity 
\[
\lambda^{an+\alpha}P_{n}^{(an+\alpha,\,\beta)}(1-2\lambda^{2})
\]
increases exponentially in magnitude. The precise behavior depends on the proximity of the sequence $(an+\alpha)_n$ to integers, see the proof of Theorem~\ref{gbar} point (4). 
If $a\in(\frac{2\lambda}{1-\lambda},\infty)$ and $an+\alpha$ is not an integer then the above quantity decays
exponentially in magnitude.
\end{enumerate}
\end{thm}
\begin{rem}
\label{Rk1}
Our approach to prove points 1), 2) can also be used to derive asymptotic formulas in the exponential region $a\in(-1,\frac{-2\lambda}{1+\lambda})\cup(\frac{2\lambda}{1-\lambda},\infty)$, see Section~\ref{proofSection}. We prefer the shorter formulation here, because it is sufficient for our application.
\end{rem}
\begin{rem}
The developed methodology yields an expansion
of $P_{n}^{(an+\alpha,\,\beta)}(1-2\lambda^{2})$ to any order. The required computations are straight-forward albeit tedious. 
\end{rem}
\begin{rem}\label{oscilatingRemark}
Determining the \emph{precise} asymptotic growth of the JPVFPs can be a delicate task in cases, where the formulas involve oscillating trigonometric functions of $n$. To illustrate the phenomenon consider the oscillating term
$$\sqrt{\frac{2}{n\pi}}\cos\left(((n+1)h(\varphi_{+})+(\alpha-a)\varphi_{+}+(\beta-1)\psi{+}\frac{\pi}{4}\right)$$
in 1), which is clearly $\cO(1/\sqrt{n})$. While the second summand in 1) is only $\cO(1/\sqrt{n^{3}})$ it is still not obvious, which term actually provides the dominating contribution to the JPFVPs. For completeness we show in Section~\ref{proofSection} that given $\gamma\in\mathbb{R}$ there exists a subsequence $s_n\in\mathbb{N}$ such that $\abs{\cos(\gamma s_n+\frac{\pi}{2})}\leq K/s_n$ with a constant $K$. Notice that along such a subsequence the oscillating term can be $\cO(1/\sqrt{n^3})$.
\end{rem}
The theorem is an immediate consequence of our representation of the JPFVPs in terms of two integrals, Lemma~{\ref{striking}}, together with the
asymptotic formulas for the individual integrals, Proposition~\ref{thm:delta_negative}
and Proposition~\ref{Laplace}. The results are collected in Section~\ref{proofSection}.
Various methods are employed in the literature to prove asymptotic
expansions. Gawronksi-Shawyer~\cite{GS}~as well as Saff-Varga~\cite{SV}
rely on the method of steepest descent~\cite[p. 147]{BO}
\cite[p. 136]{FO0}, while Chen-Ismail~\cite{CI} and Izen~\cite{SI}
make use of Darboux's asymptotic method and generating functions~\cite{FO0,GZ}.
In this article classical methods of asymptotic analysis are employed to determine the asymptotic behavior of our integral representation. Even in situations when no integral representation
is available, the Riemann-Hilbert approach~\cite{KM,MO} can still
be employed to derive asymptotic expansions. The Riemann-Hilbert approach
has the additional advantage that it automatically provides uniform
(over $a,b,\lambda$) asymptotic expansions. A uniform expansion can
also be obtained from our integral representation using well-established
methods, see Section~\ref{uniform}, although this is not
the main focus of the article. 

\subsection{A new integral representation for Jacobi Polynomials}

\label{newIntRep} Although simple, we regard the below lemma as the
main innovation of this work. In what follows powers
are defined with respect to the principal branch of the complex logarithm,
which is denoted by $\Log$. 
\begin{lem}[Integral representation for JPFVPs]
\label{striking} Let $n$ be an integer,{{} let
$a>-1$ and $\alpha,\,\beta\in\mathbb{R}$ be such that $(a+1)n+\alpha>-1$.
}Given $\lambda\in(0,\,1)$, for any $x\in(\lambda,\,1/\lambda)$
we have the following integral representation for the Jacobi polynomials
with first varying parameter
\begin{align*}
 & \lambda^{an+\alpha}(1-\lambda^{2})^{\beta}P_{n}^{(an+\alpha,\,\beta)}(1-2\lambda^{2})\\
 & =\frac{1}{\pi}\Re\left\{ \int_{0}^{\pi}{z^{\alpha+1}\frac{(1-\lambda z)^{\beta}}{z-\lambda}\bigg(z^{a+1}\frac{(1-\lambda z)}{z-\lambda}\bigg)^{n}}\Bigg|_{z={x}e^{i\varphi}}\d\varphi\right\} \\
 & -\frac{{\sin\left(\pi(an+\alpha)\right)}}{\pi}\int_{0}^{{x}}\frac{(1+\lambda t)^{\beta}t^{\alpha}}{t+\lambda}\left(t^{(a+1)}\frac{(1+\lambda t)}{t+\lambda}\right)^{n}{\rm d}t,
\end{align*}

where $\Re(\bullet)$ stands for the real part of a complex number. 
\end{lem}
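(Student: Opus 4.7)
The plan is to combine Rodrigues' formula with Cauchy's integral representation for the $n$-th derivative, make a rational change of variables, and then deform the resulting contour around the branch cut of the integrand on the negative real axis. I would start from Rodrigues' formula for $P_n^{(an+\alpha,\beta)}$ evaluated at $x = 1-2\lambda^2$. Using $1-x = 2\lambda^2$ and $1+x = 2(1-\lambda^2)$, Cauchy's formula writes the left hand side as a loop integral around $w = 1-2\lambda^2$. Substituting $w = 1-2\lambda z$ gives $1-w = 2\lambda z$, $1+w = 2(1-\lambda z)$, $w-(1-2\lambda^2) = -2\lambda(z-\lambda)$, and $\d w = -2\lambda\,\d z$; the powers of $2$ and $\lambda$ cancel against the Rodrigues prefactor and all sign factors collapse to $+1$, leaving the compact identity
\[
\lambda^{an+\alpha}(1-\lambda^2)^\beta P_n^{(an+\alpha,\beta)}(1-2\lambda^2) = \frac{1}{2\pi i}\oint_{\gamma'} \frac{z^{n+an+\alpha}(1-\lambda z)^{n+\beta}}{(z-\lambda)^{n+1}}\,\d z,
\]
where $\gamma'$ is a small positively oriented loop around $z = \lambda$.

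With the principal branches of $z^{n+an+\alpha}$ and $(1-\lambda z)^{n+\beta}$, the new integrand has a pole of order $n+1$ at $z=\lambda$ and branch cuts on $(-\infty,0]$ and $[1/\lambda,\infty)$. For $x\in(\lambda,1/\lambda)$ the region $\Omega_x = \{|z|<x\}\setminus[-x,0]$ contains $z=\lambda$ as its only singularity, so I would deform $\gamma'$ to $\partial\Omega_x$, written as the concatenation of an upper semicircle $C_+$ ($z=xe^{i\varphi}$, $\varphi\in[0,\pi]$), the upper lip $L_+$ of the slit ($z=-t+i0^+$), a clockwise loop $K_\epsilon$ of radius $\epsilon$ around the origin, the lower lip $L_-$, and the lower semicircle $C_-$ ($z=xe^{i\varphi}$, $\varphi\in[-\pi,0]$).

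I would then evaluate each piece. Near the origin the integrand is $O(|z|^{n+an+\alpha})$, so $K_\epsilon$ contributes $O(\epsilon^{n+an+\alpha+1})$, which vanishes as $\epsilon\to 0$ because $a,\alpha>-1$. The identity $f(xe^{-i\varphi}) = \overline{f(xe^{i\varphi})}$ (valid for the principal branches on $|z|=x$ away from $z=-x$, using $x<1/\lambda$) combined with $\d z = iz\,\d\varphi$ yields
\[
\frac{1}{2\pi i}\left(\int_{C_+}+\int_{C_-}\right) f(z)\,\d z \;=\; \frac{1}{\pi}\Re\int_0^\pi zf(z)\,\d\varphi\bigg|_{z=xe^{i\varphi}},
\]
and rewriting $zf(z) = z^{\alpha+1}\frac{(1-\lambda z)^\beta}{z-\lambda}\bigl(z^{a+1}\frac{1-\lambda z}{z-\lambda}\bigr)^n$ matches the first term of the claim. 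On $L_\pm$ I would set $z=-t\pm i0$, so that the jump of $z^{n+an+\alpha}$ across the cut equals $2i(-1)^n t^{n+an+\alpha}\sin(\pi(an+\alpha))$; combining this with $(-t-\lambda)^{n+1}=(-1)^{n+1}(t+\lambda)^{n+1}$ and $(1-\lambda z)|_{z=-t}=1+\lambda t$, and assembling the overall sign $(-1)^n(-1)^{-(n+1)}=-1$, yields exactly the second summand with the stated coefficient $-\sin(\pi(an+\alpha))/\pi$.

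The main challenge is bookkeeping: signs, orientations, and principal-branch values must be propagated consistently through the change of variables and the contour deformation, since a stray $(-1)^n$ or $e^{\pm i\pi(an+\alpha)}$ would yield an incorrect identity of the sort the introduction attributes to earlier works. A subsidiary point is justifying the deformation itself, which is immediate once one checks that $\Omega_x$ contains no further singularities of the integrand.
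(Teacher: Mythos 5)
Your proposal is correct and follows the same overall architecture as the paper: the same contour (circle of radius $x$, two lips along $[-x,0]$, a shrinking loop at the origin) applied to the same integrand $z^{(a+1)n+\alpha}(1-\lambda z)^{n+\beta}(z-\lambda)^{-(n+1)}$, with the semicircles producing the real-part term, the origin loop vanishing, and the branch-cut jump producing the $\sin(\pi(an+\alpha))$ term. Your sign bookkeeping on the lips ($2i(-1)^n\sin(\pi(an+\alpha))$ against $(-1)^{n+1}(t+\lambda)^{n+1}$) and the conjugate-symmetry reduction of the two semicircles both check out.

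The one step you do differently is the identification of the residue integral
$\frac{1}{2\pi i}\oint_{|z-\lambda|=s} f\,\d z$ with $\lambda^{an+\alpha}(1-\lambda^{2})^{\beta}P_{n}^{(an+\alpha,\beta)}(1-2\lambda^{2})$. The paper obtains this by expanding $(\lambda+se^{i\phi})^{\alpha+(a+1)n}$ and $(1-\lambda^{2}-s\lambda e^{i\phi})^{n+\beta}$ in two binomial series, integrating term by term, and matching the surviving coefficients against the explicit sum $P_{n}^{(\alpha,\beta)}(x)=\sum_{\mu}\binom{n+\alpha}{n-\mu}\binom{n+\beta}{\mu}(\frac{x-1}{2})^{\mu}(\frac{x+1}{2})^{n-\mu}$; you instead invoke Rodrigues' formula together with Cauchy's formula for the $n$-th derivative and the affine substitution $w=1-2\lambda z$. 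Both are valid; I verified that your prefactors collapse to exactly $\lambda^{-an-\alpha}(1-\lambda^{2})^{-\beta}$ as claimed. The paper's series route is self-contained relative to its stated definition of $P_n^{(\alpha,\beta)}$, whereas yours is shorter but imports the fact that the Rodrigues formula remains valid for the non-classical (real, non-integer) parameter $an+\alpha$ — true, and checkable by Leibniz's rule, but worth stating explicitly since it is precisely the non-integer parameter that creates the branch cut driving the second term.
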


This representation
is particularly suited for asymptotic analysis.

\begin{enumerate}
\item The first integral can be written as a so-called \textit{generalized
Fourier integral} \cite{AE}, \cite[Chapter 6.5]{BO}. The latter
is of the form 
\[
\int g(\varphi)e^{inh(\varphi)}\d\varphi
\]
with continuous and real functions $g$, $h$. To determine the asymptotic behavior we employ the method of stationary phase, which is
commonly used to study asymptotic properties of oscillatory integrals.
The method is conceptually similar to \emph{Laplace's method} (see
below) in that the leading order terms are contributed by a small neighborhood
around the stationary points of $h$. Our analysis relies on the well-known
fact~\cite[Theorem 4]{AE},~\cite[Chapter 6.5]{BO} that if $\xi$
is a stationary point of $h$ such that $g(\xi)\neq0$ then 
\begin{enumerate}
\item the integral goes to 0 as $n^{-1/2}$ if $h''(\xi)\neq0$, 
\item the integral goes to 0 as $n^{-1/3}$ if $h''(\xi)=0$ but $h^{(3)}(\xi)\neq0$. 
\end{enumerate}
\item The second integral contributes iff $an+\alpha$ is not an integer
and is of~\emph{Laplace type}~\cite[Chapter 3]{FO0}. The latter
is of the form 
\[
\int g(t)e^{nh(t)}\d t,
\]
with continuous and real functions $g$, $h$. To determine the asymptotic
behavior we employ Laplace's method. The idea is that if $h$ has
a maximum at $\xi$ and if $g(\xi)\neq0$ then as $n$ grows only
values in an immediate neighborhood of $\xi$ contribute~\cite{BO,FO0}. 
\end{enumerate}
Curiously the second integral contributes iff $an+\alpha$ is not
an integer. As a consequence the magnitude of JPFVPs \lq\lq{}scatters\rq\rq{}
as a function of $n$ depending on $(a,\alpha)$.
To determine the asymptotic behavior it is therefore necessary to
consider subsequences of $n$. On a subsequence of $n$ where $an+\alpha$
is an integer the second integral does never contribute. However for
such $n$ that $an+\alpha$ is not an integer the second integral
contributes, which potentially alters the asymptotic behavior in a
\lq\lq{}discontinuous\rq\rq{} way. 
\begin{proof}
We investigate the complex
function 
\[
f:z\mapsto z^{(a+1)n+\alpha}\frac{(1-\lambda z)^{n+\beta}}{(z-\lambda)^{n+1}}.
\]
We integrate $f$ along a closed contour $\gamma$ in the complex
plane, see Figure~\ref{fig:contour1}. The contour is composed of
four curves $\gamma=\gamma_{1}\oplus\gamma_{2}\oplus\gamma_{3}\oplus\gamma_{4}$
with $x\in(\lambda,1/\lambda)$ and 
\begin{align*}
\gamma_{1} & :\varphi\mapsto xe^{i\varphi},\quad\varphi\in(-\pi,\pi],\\
\gamma_{2} & :\varphi\mapsto\epsilon e^{-i\varphi},\quad\varphi\in(-\pi,\pi],\\
\gamma_{3} & :t\mapsto t+i\tilde{\epsilon},\quad t\in[-x,{0}],\\
\gamma_{4} & :t\mapsto-t-i\tilde{\epsilon},\quad t\in[0,{x}].
\end{align*}
\begin{figure}
\begin{tikzpicture}[xscale=-1]
\def\gap{0.2}
\def\bigradius{3}
\def\littleradius{0.5}
\draw (-1.1*\bigradius, 0) -- (1.1*\bigradius,0)
       (0, -1.1*\bigradius) -- (0, 1.1*\bigradius);

         \draw[with arrows]
  (-1.7,0) circle[reversed with radius=0.4cm]
  ;
\draw[black, thick,   decoration={ markings,
      mark=at position 0.05 with {\arrow[line width=1pt]{<}},
       mark=at position 0.1 with {\arrow[line width=1pt]{<}},
      mark=at position 0.17 with {\arrow[line width=1pt]{<}},
       mark=at position 0.22 with {\arrow[line width=1pt]{<}},
        mark=at position 0.3 with {\arrow[line width=1pt]{<}},
        mark=at position 0.41 with {\arrow[line width=1pt]{<}},
      mark=at position 0.47 with {\arrow[line width=1pt]{<}},
      mark=at position 0.53 with {\arrow[line width=1pt]{<}},
       mark=at position 0.60 with {\arrow[line width=1pt]{<}},
       mark=at position 0.68 with {\arrow[line width=1pt]{<}},
      mark=at position 0.755 with {\arrow[line width=1pt]{<}},
       mark=at position 0.81 with {\arrow[line width=1pt]{<}},
       mark=at position 0.84 with {\arrow[line width=1pt]{<}},
      mark=at position 0.87 with {\arrow[line width=1pt]{<}},
        mark=at position 0.9 with {\arrow[line width=1pt]{<}},
        mark=at position 0.955 with {\arrow[line width=1pt]{<}}
        mark=at position 0.01 with {\arrow[line width=1pt]{<}}},
      postaction={decorate}]
  let
     \n1 = {asin(\gap/2/\bigradius)},
     \n2 = {asin(\gap/2/\littleradius)}
  in (\n1:\bigradius) arc (\n1:360-\n1:\bigradius)
  -- (-\n2:\littleradius) arc (-\n2:-360+\n2:\littleradius)
  -- cycle;
\node at (0.2,0.7) {$\gamma_{2}$};
\node at (-1.8,2.8) {$\gamma_{1}$};
\node at (1.555,0.5) {$\gamma_{3}$};
\node at (1.555,-0.5) {$\gamma_{4}$};
\node at (-1.7,0.7) {$\gamma_{5}$};
\node at (-1.7,-0.25) {$\lambda$};
\fill (-1.7,0) circle (0.05);
\node at (-3.1,-0.3) {$1$};
\fill (-3,0) circle (0.05);
\end{tikzpicture} \caption{Contours $\gamma=\gamma_{1}\oplus\gamma_{2}\oplus\gamma_{3}\oplus\gamma_{4}$
and $\gamma_{5}$ for $x=1$.}
\label{fig:contour1} 
\end{figure}
%
Due to holomorphy we have that 
\begin{align*}
\frac{1}{2\pi i}\int_{\gamma_{1}\oplus\gamma_{2}\oplus\gamma_{3}\oplus\gamma_{4}}f\ \d z=\frac{1}{2\pi i}\int_{\gamma_{5}}f\ \d z,
\end{align*}
where $\gamma_{5}$ denotes a small circle around the pole $\lambda$,
see Figure~\ref{fig:contour1}, 
\begin{align*}
\gamma_{5}:\varphi\mapsto\lambda+se^{i\varphi},\ \varphi\in(-\pi,\pi].
\end{align*}
The lemma is proved by computing each of the five integrals individually. 
\begin{itemize}
\item We observe that 
\begin{align*}
\frac{1}{2\pi i}\int_{\gamma_{1}}f\ \d z & =\frac{1}{2\pi}\int_{-\pi}^{\pi}z^{(a+1)n+\alpha+1}\frac{(1-\lambda z)^{n+\beta}}{(z-\lambda)^{n+1}}\Bigg|_{z={x}e^{i\varphi}}\d\varphi\\
 & =\frac{1}{\pi}\Re\left\{ \int_{0}^{\pi}{z^{(a+1)n+\alpha+1}\frac{(1-\lambda z)^{n+\beta}}{(z-\lambda)^{n+1}}}\Bigg|_{z={x}e^{i\varphi}}\d\varphi\right\} .
\end{align*}
\item The standard estimate for contour integrals reads 
\begin{align*}
\Abs{\frac{1}{2\pi i}\int_{\gamma_{2}}f\ \d z} & \leq\epsilon\max_{z\in\gamma_{2}}\Abs{z^{(a+1)n+\alpha+1}\frac{(1-\lambda z)^{n+\beta}}{(z-\lambda)^{n+1}}}\\
 & \leq\epsilon^{(a+1)n+\alpha+2}\max_{z\in\gamma_{2}}\Abs{\frac{(1-\lambda z)^{n+\beta}}{(z-\lambda)^{n+1}}},
\end{align*}
which goes to $0$ as $\epsilon\rightarrow0$.
\item We have $z^{y}=e^{y\Log(z)}=e^{y\abs{z}+iy\Arg(z)}$ with the main
branch of the argument and so 
\begin{align*}
\int_{\gamma_{3}}f\ \d z & =\int_{-x}^{0}z^{(a+1)n+\alpha}\frac{(1-\lambda z)^{\beta}}{z-\lambda}\left(\frac{1-\lambda z}{z-\lambda}\right)^{n}\Bigg|_{z=t+i\tilde{\varepsilon}}\d t\\
 & \xrightarrow{\tilde{\varepsilon}\rightarrow0}\int_{-x}^{0}\abs{t}^{(a+1)n+\alpha}e^{((a+1)n+\alpha)\pi i}\frac{\abs{1-\lambda t}^{\beta}}{\abs{t}e^{\pi i}-\lambda}\left(\frac{1-\lambda\abs{t}e^{\pi i}}{\abs{t}e^{\pi i}-\lambda}\right)^{n}\d t\\
 & =e^{((a+1)n+\alpha)\pi i}(-1)^{n+1}\int_{-x}^{0}\abs{t}^{(a+1)n+\alpha}\frac{\abs{1-\lambda t}^{\beta}}{\abs{t}+\lambda}\left(\frac{1+\lambda\abs{t}}{\abs{t}+\lambda}\right)^{n}\d t\\
 & =e^{((a+1)n+\alpha)\pi i}(-1)^{n+1}\int_{0}^{x}t^{(a+1)n+\alpha}\frac{(1+\lambda t)^{\beta}}{t+\lambda}\left(\frac{1+\lambda t}{t+\lambda}\right)^{n}\d t.
\end{align*}
Similarly we find that 
\begin{align*}
 & \int_{\gamma_{4}}f\ \d z=\\
 & -\int_{0}^{{x}}z^{(a+1)n+\alpha}\frac{(1-\lambda z)^{\beta}}{z-\lambda}\left(\frac{1-\lambda z}{z-\lambda}\right)^{n}\Bigg|_{z=-t-i\tilde{\varepsilon}}\d t\\
 & \xrightarrow{\tilde{\varepsilon}\rightarrow0}-\int_{0}^{{x}}\abs{t}^{(a+1)n+\alpha}e^{-((a+1)n+\alpha)\pi i}\frac{\abs{1+\lambda t}^{\beta}}{\abs{t}e^{-\pi i}-\lambda}\left(\frac{1-\lambda\abs{t}e^{-\pi i}}{\abs{t}e^{-\pi i}-\lambda}\right)^{n}\d t\\
 & =-e^{-((a+1)n+\alpha)\pi i}(-1)^{n+1}\int_{0}^{{x}}t^{(a+1)n+\alpha}\frac{(1+\lambda t)^{\beta}}{t+\lambda}\left(\frac{1+\lambda t}{t+\lambda}\right)^{n}\d t.
\end{align*}
Summing the two terms shows that 
\begin{align*}
 & \frac{1}{2\pi i}\int_{\gamma_{3}\oplus\gamma_{4}}f\ \d z\ {\xrightarrow{\tilde{\varepsilon}\rightarrow0}}\\
 & (-1)^{n+1}\frac{\sin{(((a+1)n+\alpha)\pi})}{\pi}\int_{0}^{{x}}t^{(a+1)n+\alpha}\frac{(1+\lambda t)^{\beta}}{t+\lambda}\left(\frac{1+\lambda t}{t+\lambda}\right)^{n}\d t.
\end{align*}
\item To evaluate the integral over $\gamma_{5}$ we note that $f$ is holomorphic
in a dotted neighborhood around $\lambda$. We compute 
\begin{align*}
\frac{1}{2\pi i}\int_{\gamma_{5}}f\ \d z & =\frac{s^{-n}}{2\pi}\int_{-\pi}^{\pi}(\lambda+se^{i\phi})^{(a+1)n+\alpha}(1-\lambda^{2}-s\lambda e^{i\varphi})^{n+\beta}e^{-in\varphi}\d\varphi.
\end{align*}
For $s\in(0,\,\lambda)$ 
\[
(\lambda+se^{i\phi})^{(a+1)n+\alpha}=\sum_{\mu=0}^{\infty}\binom{(a+1)n+\alpha}{\mu}\lambda^{(a+1)n+\alpha-\mu}(se^{i\phi})^{\mu},
\]
and for $s\in(0,\,\frac{1-\lambda^{2}}{\lambda})$ 
\[
(1-\lambda^{2}-s\lambda e^{i\phi})^{n+\beta}=\sum_{\nu=0}^{\infty}\binom{{n+\beta}}{\nu}(1-\lambda^{2})^{{n+\beta}-\nu}(-\lambda se^{i\phi})^{\nu}
\]
and we conclude 
\begin{align*}
 & \frac{1}{2\pi i}\int_{\gamma_{5}}f\ \d z=\\
 & \sum_{\mu,\nu=0}^{\infty}\binom{(a+1)n+\alpha}{\mu}\binom{{n+\beta}}{\nu}\lambda^{(a+1)n+\alpha-\mu}(-\lambda)^{\nu}(1-\lambda^{2})^{{n+\beta}-\nu}s{}^{\mu+\nu}\\
 & \cdot\frac{s^{-n}}{2\pi}\int_{-\pi}^{\pi}e^{-i(\nu+\mu-n)\phi}\frac{{\rm d}\phi}{2\pi}.
\end{align*}
The last integral is $0$ unless $\nu+\mu-n=0$ so that we set $\nu=n-\mu$
and find 
\begin{align*}
 & \frac{1}{2\pi i}\int_{\gamma_{5}}f\ \d z\\
 & =\sum_{\mu=0}^{n}\binom{(a+1)n+\alpha}{\mu}\binom{{n+\beta}}{n-\mu}\lambda^{(a+1)n+\alpha-\mu}(-\lambda)^{n-\mu}(1-\lambda^{2})^{\beta+\mu}\\
 & =\lambda^{an+\alpha}(1-\lambda^{2})^{\beta}\sum_{\mu=0}^{n}\binom{(a+1)n+\alpha}{\mu}\binom{{n+\beta}}{n-\mu}(-\lambda^{2})^{n-\mu}(1-\lambda^{2})^{\mu}\\
 & =\lambda^{an+\alpha}(1-\lambda^{2})^{\beta}P_{n}^{(an+\alpha,\,\beta)}(1-2\lambda^{2}).
\end{align*}
\end{itemize}
\end{proof}

\section{Asymptotic analysis of integral representation}

\label{asymptInt} 

\subsection{Generalized Fourier integral via the method of stationary phase }

\label{sub:The-stationary-phase}

This section is devoted to the study of the large $n$ behavior of
the integral 
\[
\int_{0}^{\pi}z^{\alpha+1}\frac{(1-\lambda z)^{\beta}}{z-\lambda}\bigg(z^{a+1}\frac{1-\lambda z}{z-\lambda}\bigg)^{n}\Bigg|_{z=e^{i\varphi}}\d\varphi
\]
when $a\in[-\frac{2\lambda}{1+\lambda},\frac{2\lambda}{1+\lambda}]$. This is achieved using standard methods~\cite{AE},~\cite[Chapter~6.5]{BO}
from the theory of generalized Fourier-type integrals. The main idea
behind the~method of stationary phase is that the
leading order contribution to the integral comes from a small neighborhood
around the stationary points of $h$. To apply the method we observe
that if $z=e^{i\varphi}$ then $\Abs{z^{a+1}\frac{1-\lambda z}{z-\lambda}}=1$
and we introduce the real functions $g,h$ by the relations
\begin{align*}
\varphi & \mapsto g(\varphi)=z^{\alpha+1}\frac{(1-\lambda z)^{\beta}}{z-\lambda}\Bigg|_{z=e^{i\varphi}},\\
\varphi & \mapsto h(\varphi)={-i}\Log\left(\frac{z^{a+1}(1-\lambda z)}{z-\lambda}\right)\Bigg|_{z=e^{i\varphi}}=\Arg\left(\frac{z^{a+1}(1-\lambda z)}{z-\lambda}\right)\Bigg|_{z=e^{i\varphi}}.
\end{align*}
With this notation we can write out the integral in Fourier form 
\begin{align*}
\int_{0}^{\pi}z^{\alpha+1}\frac{(1-\lambda z)^{\beta}}{z-\lambda}\bigg(z^{a+1}\frac{1-\lambda z}{z-\lambda}\bigg)^{n}\Bigg|_{z=e^{i\varphi}}\d\varphi=\int_{0}^{\pi}g(\varphi)e^{inh(\varphi)}\d\varphi.
\end{align*}
The first step is to gain a picture about the stationary points of
$h$ on $[0,\pi]$. Computing derivatives by the chain rule yields
\begin{align*}
i\frac{dh}{dz} & =-\frac{1}{z-\lambda}+\frac{a+1}{z}-\frac{\lambda}{1-\lambda z},\\
i\frac{d^2h}{dz^2} & =\frac{1}{(z-\lambda)^{2}}-\frac{a+1}{z^{2}}-\frac{\lambda^{2}}{(1-\lambda z)^{2}},\\
i\frac{d^{3}h}{dz^{3}} & =-\frac{2}{(z-\lambda)^{3}}+\frac{2(a+1)}{z^{3}}-\frac{2\lambda^{3}}{(1-\lambda z)^{3}}.
\end{align*}
The function $h(\varphi)$ has a stationary point if and only if $dh/dz=0$,
i.e. iff 
\begin{align*}
a=\frac{\lambda}{z-\lambda}+\frac{\lambda z}{1-\lambda z}.
\end{align*}
Solving the latter for $z$ gives 
\[
z_{+,-}=\frac{a+a\lambda^{2}+2\lambda^{2}}{2\lambda(a+1)}\pm i\sqrt{1-\left(\frac{a+a\lambda^{2}+2\lambda^{2}}{2\lambda(a+1)}\right)^{2}}\in\partial\mathbb{D}
\]
and we write $z_{+,-}=e^{i\varphi_{+,-}}$ with $\varphi_{+}\in[0,\pi]$
and $\varphi_{-}\in(-\pi,0]$. For the second derivative we have that
\[
h''(\varphi)=\frac{d}{d\varphi}\left(\frac{dh}{dz}\frac{dz}{d\varphi}\right)=\frac{d^{2}h}{dz^{2}}\left(\frac{dz}{d\varphi}\right)^{2}+\frac{dh}{dz}\frac{d^{2}z}{(d\varphi)^{2}}.
\]
We distinguish the two cases \emph{1)} $a\in(-\frac{2\lambda}{1+\lambda},\,\frac{2\lambda}{1-\lambda})$
and \emph{2)} $a\in\left\{ -\frac{2\lambda}{1+\lambda},\,\frac{2\lambda}{1-\lambda}\right\} $,
which are characterized by the presence of a stationary point of order
one ($h'(\varphi_{+})=0$ but $h''(\varphi_{+})\neq0$) in \emph{Case
1)} and of order two ($h'(\varphi_{+})=h''(\varphi_{+})=0$ but $h'''(\varphi_{+})\neq0$)
in \emph{Case 2)}. 
We are ready to present the asymptotic behavior of the Fourier-type integral.  
\begin{prop}
\label{thm:delta_negative} Let $\lambda\in(0,\,1),$ $a\in[-\frac{2\lambda}{1+\lambda},\,\frac{2\lambda}{1-\lambda}]$
and $\alpha,\,\beta\in\mathbb{R}$. We have the following asymptotic
expansion as $n\rightarrow\infty$: 
\begin{enumerate}
\item If $a\in(-\frac{2\lambda}{1+\lambda},\,\frac{2\lambda}{1-\lambda})$
then 
\begin{align*}
 & \frac{1}{\pi}\Re{\left\{ \int_{0}^{\pi}z^{\alpha+1}\frac{(1-\lambda z)^{\beta}}{z-\lambda}\bigg(z^{a+1}\frac{1-\lambda z}{z-\lambda}\bigg)^{n}\Bigg|_{z=e^{i\varphi}}\d\varphi\right\} }\\
 & {=}\sqrt{\frac{2}{n\pi}}{\frac{(1-\lambda^{2})^{\frac{\beta}{2}}}{(a+1)^{\frac{\beta}{2}}}}\frac{\cos{\left((n+1)h(\varphi_{+})+(\alpha-a)\varphi_{+}+(\beta-1)\psi{+}\frac{\pi}{4}\right)}}{\left[{(1-\lambda^{2})}((a+2)\lambda+a)((a+2)\lambda-a)\right]^{\frac{1}{4}}}\\
 & +\frac{\sin\left((an+\alpha)\pi\right)}{n\pi}\frac{(1+\lambda)^{\beta}}{a(1+\lambda)+2\lambda}+\cO\left(\frac{1}{n^{3/2}}\right),
\end{align*}
where the parameters $\varphi_{+},\psi\in[0,\pi]$ are defined via the relations 
\begin{align*}
e^{i\varphi_{+}}=\frac{a+a\lambda^{2}+2\lambda^{2}}{2\lambda(a+1)}+i\sqrt{1-\left(\frac{a+a\lambda^{2}+2\lambda^{2}}{2\lambda(a+1)}\right)^{2}}
\end{align*}
and 
\begin{align*}
\sqrt{\frac{1-\lambda^{2}}{a+1}}e^{i\psi}=1-\lambda z_{+}.
\end{align*}
\item If $a=\frac{2\lambda}{1-\lambda}$ then 
\begin{align*}
 & \frac{1}{\pi}\Re{\left\{ \int_{0}^{\pi}z^{\alpha+1}\frac{(1-\lambda z)^{\beta}}{z-\lambda}\bigg(z^{a+1}\frac{1-\lambda z}{z-\lambda}\bigg)^{n}\Bigg|_{z=e^{i\varphi}}\d\varphi\right\} }\\
 & =\frac{(1-\lambda)^{\beta}}{3^{2/3}\Gamma(2/3)n^{1/3}\lambda^{1/3}(1+\lambda)^{1/3}}\left(1+\cO\left(\frac{1}{n^{1/3}}\right)\right),
\end{align*}
and if $a=-\frac{2\lambda}{1+\lambda}$ then 
\begin{align*}
 & \frac{1}{\pi}\Re{\left\{ \int_{0}^{\pi}z^{\alpha+1}\frac{(1-\lambda z)^{\beta}}{z-\lambda}\bigg(z^{a+1}\frac{1-\lambda z}{z-\lambda}\bigg)^{n}\Bigg|_{z=e^{i\varphi}}\d\varphi\right\} }=\\
 & {\frac{1}{\pi}\cos{\left((an+\alpha+\frac{1}{6})\pi\right)}\frac{\Gamma(1/3)(1+\lambda)^{\beta}}{3^{2/3}}\left(\frac{1}{n\lambda(1-\lambda)}\right)^{1/3}} + \cO\left(\frac{1}{n^{2/3}}\right).
\end{align*}
\end{enumerate}
\end{prop}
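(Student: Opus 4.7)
My plan is to treat the integral as a generalized Fourier integral $\int_0^\pi g(\varphi)e^{inh(\varphi)}\d\varphi$ and apply the standard method of stationary phase. Since $|z^{a+1}(1-\lambda z)/(z-\lambda)|=1$ on the unit circle, I can write $z^{a+1}(1-\lambda z)/(z-\lambda)=e^{ih(\varphi)}$ at $z=e^{i\varphi}$ with $h$ real-valued, and let $g(\varphi)=z^{\alpha+1}(1-\lambda z)^{\beta}/(z-\lambda)|_{z=e^{i\varphi}}$. From $h'(\varphi)=(a+1)-\lambda z/(1-\lambda z)-z/(z-\lambda)$ I obtain that the stationary points solve the quadratic $(a+1)\lambda z^2-[a+(a+2)\lambda^2]z+(a+1)\lambda=0$, whose discriminant factors as $\Delta=(\lambda^2-1)((a+2)\lambda-a)((a+2)\lambda+a)$. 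This is negative precisely for $a\in(-\frac{2\lambda}{1+\lambda},\frac{2\lambda}{1-\lambda})$ and vanishes at the two endpoints, which naturally organizes the two cases.

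For case (1), the product of roots equals $1$, so $\Delta<0$ forces the roots to be complex conjugates on the unit circle, giving a single simple stationary point $\varphi_+\in(0,\pi)$ with $\cos\varphi_+$ as stated. The cornerstone identity $(z_+-\lambda)(1-\lambda z_+)=(1-\lambda^2)z_+/(a+1)$, obtained by reducing $\lambda z_+^2$ modulo the quadratic, immediately yields $|1-\lambda z_+|^2=|z_+-\lambda|^2=(1-\lambda^2)/(a+1)$, which controls $|g(\varphi_+)|=|1-\lambda z_+|^{\beta-1}$. Differentiating $h'$ and substituting the identity shows $h''(\varphi_+)=2\lambda\sin\varphi_+(a+1)^2/(1-\lambda^2)>0$, and $\sin\varphi_+$ is recovered from $1-\cos^2\varphi_+=-\Delta/(4\lambda^2(a+1)^2)$. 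To match the phase in the stated formula, I use $\arg(1-\lambda z_+)=\psi$ and deduce $\arg(z_+-\lambda)=(a+1)\varphi_+ +\psi-h(\varphi_+)$ from $F(z_+)=e^{ih(\varphi_+)}$, which consolidates $\arg g(\varphi_+)+nh(\varphi_+)$ into $(n+1)h(\varphi_+)+(\alpha-a)\varphi_+ +(\beta-1)\psi$. Standard stationary phase then gives the $n^{-1/2}$ term. The $n^{-1}$ correction comes from integration by parts at the two endpoints: at $\varphi=0$ one has $h(0)=0$ and real $g(0)/h'(0)$, so the boundary term is purely imaginary and drops out after taking real part, while at $\varphi=\pi$, using $h'(\pi)=(a(1+\lambda)+2\lambda)/(1+\lambda)$, $g(\pi)=-(-1)^{\alpha+1}(1+\lambda)^{\beta-1}$, and $nh(\pi)\equiv n\pi(a+2)\pmod{2\pi}$, the real part of the boundary term produces the stated $\sin((an+\alpha)\pi)/(n\pi)\cdot(1+\lambda)^\beta/(a(1+\lambda)+2\lambda)$ correction.

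For case (2), the double root degenerates to $z_+=1$ (resp. $z_+=-1$), so the stationary point coincides with an integration endpoint and $h''$ vanishes there. Direct differentiation yields $h'''(0)=2\lambda(1+\lambda)/(1-\lambda)^3$ when $a=2\lambda/(1-\lambda)$, with the analogous sign-reversed expression at $\varphi=\pi$ for $a=-2\lambda/(1+\lambda)$. I then apply the cubic-degenerate endpoint formula $\int_0^\infty e^{i\sigma n u^3/6}\d u=\frac{\Gamma(1/3)}{3}(6/n)^{1/3}e^{i\sigma\pi/6}$ with $g(0)=(1-\lambda)^{\beta-1}$ and $h(0)=0$; taking the real part, the factor $\cos(\pi/6)=\sqrt{3}/2$ combines with the reflection identity $\Gamma(1/3)\Gamma(2/3)=2\pi/\sqrt{3}$ to produce exactly the claimed constant $1/(3^{2/3}\Gamma(2/3))$. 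For $a=-2\lambda/(1+\lambda)$, the same procedure at $\varphi=\pi$ works except that $e^{inh(\pi)}$ is genuinely complex, so the real part introduces the $\cos((an+\alpha+1/6)\pi)$ oscillation visible in the statement.

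The main obstacle is the bookkeeping around the endpoint $\varphi=\pi$: tracking the continuous branch of $h$ starting from $h(0)=0$ to pin down $e^{inh(\pi)}$ modulo $2\pi$, and ensuring that in case (2) the phase of $g(\pi)e^{inh(\pi)}$ combines correctly with the $e^{i\sigma\pi/6}$ coming from the cubic integral to yield the shift $+1/6$ inside the cosine. Beyond this, everything reduces to the standard stationary-phase calculus together with the algebraic identity $(z_+-\lambda)(1-\lambda z_+)=(1-\lambda^2)z_+/(a+1)$, which drives essentially all the simplifications.
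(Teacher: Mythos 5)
Your proposal is correct and follows essentially the same route as the paper: the integral is treated as a generalized Fourier integral $\int_0^\pi g(\varphi)e^{inh(\varphi)}\,\d\varphi$, the stationary points are found from the same quadratic in $z$, the leading $n^{-1/2}$ amplitude and phase are assembled from $|1-\lambda z_+|=|z_+-\lambda|=\sqrt{(1-\lambda^2)/(a+1)}$ exactly as in the paper, the $O(n^{-1})$ endpoint terms at $\varphi=0$ (purely imaginary, discarded) and $\varphi=\pi$ (the $\sin((an+\alpha)\pi)$ correction) are handled identically, and the degenerate cases $a\in\{-\tfrac{2\lambda}{1+\lambda},\tfrac{2\lambda}{1-\lambda}\}$ are treated by the same third-order endpoint expansion with the reflection formula $\Gamma(1/3)\Gamma(2/3)=2\pi/\sqrt{3}$. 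The only cosmetic differences are that you give the closed form $h''(\varphi_+)=2\lambda(a+1)^2\sin\varphi_+/(1-\lambda^2)$ directly and skip the paper's explicit splitting of the integral at $\varphi_+$ (needed to apply Erdélyi's endpoint version of the stationary-phase theorem), neither of which changes the argument.
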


\begin{proof}[Proof of Proposition~\ref{thm:delta_negative}]
\leavevmode
\begin{enumerate}
\item If $a\in(-\frac{2\lambda}{1+\lambda},\,\frac{2\lambda}{1-\lambda})$
then the zeros $z_{+}=e^{i\varphi_{+}}$ and $z_{-}=e^{i\varphi_{-}}$
of $h'$ are distinct points located on $\partial\mathbb{D}$ with
$\varphi_{+}\in[0,\pi]$ and $\varphi_{-}\in(-\pi,0]$. Only $z_{+}$ is interesting because
we integrate over $[0,\pi]$. Plugging in
we see that
\begin{eqnarray*}
i\frac{d^{2}h}{dz^{2}}\Bigg|_{z=z_{+}} & = & \frac{(1-\lambda^{2})(1-z_{+}^{2})\lambda}{z_{+}(z_{+}-\lambda)^{2}(1-\lambda z_{+})^{2}}
\end{eqnarray*}
so that $h''(\varphi_{+})>0$. To find the asymptotics we apply a
standard result by A. Erdélyi~\cite[Theorem 4]{AE} (see also F.
Olver \cite[Theorem 1]{FO1} for a more explicit form), which however
requires that the stationary point is an endpoint of the interval
of integration. Hence we begin by splitting 
\[
\int_{0}^{\pi}g(\varphi)e^{inh(\varphi)}\d\varphi=\int_{0}^{\varphi_{+}}g(\varphi)e^{inh(\varphi)}\d\varphi+\int_{\varphi_{+}}^{\pi}g(\varphi)e^{inh(\varphi)}\d\varphi.
\]
For the second integral \cite[Theorem 4]{AE}
gives 
\begin{align*}
\int_{\varphi_{+}}^{\pi}g(\varphi)e^{inh(\varphi)}\d\varphi & =\frac{1}{2}\Gamma(1/2)k(0)e^{i\frac{\pi}{4}}n^{-1/2}e^{inh(\varphi_{+})}\\
 & +\frac{1}{2}\Gamma(1)k'(0)e^{i\frac{\pi}{2}}n^{-1}e^{inh(\varphi_{+})}\\
 & -\frac{i}{n}e^{inh(\pi)}\frac{g(\pi)}{h'(\pi)}+\cO\left(\frac{1}{n^{3/2}}\right),
\end{align*}
with 
\[
k(0)=2^{1/2}g(\varphi_{+})\left(h''(\varphi_{+})\right)^{-1/2}
\]
and 
\[
k'(0)=\frac{2}{h''(\varphi_{+})}g'(\varphi_{+})-\frac{2}{h''(\varphi_{+})}\frac{h^{(3)}(\varphi_{+})}{3h''(\varphi_{+})}g(\varphi_{+}).
\]
For the first integral $\int_{0}^{\varphi_{+}}g(\varphi)e^{inh(\varphi)}\d\varphi$
we change the variable of integration $\varphi\mapsto-\varphi$ as
suggested in \cite[page 23]{AE}. We get 
\[
\int_{0}^{\varphi_{+}}g(\varphi)e^{inh(\varphi)}\d\varphi=\int_{-\varphi_{+}}^{0}g(-\varphi)e^{inh(-\varphi)}\d\varphi.
\]
Applying \cite[Theorem 4]{AE} (see also \cite[Theorem 1]{FO1}) gives
\begin{align*}
\int_{-\varphi_{+}}^{0}g(-\varphi)e^{inh(-\varphi)}\d\varphi & =\frac{1}{2}\Gamma(1/2)k(0)e^{i\frac{\pi}{4}}n^{-1/2}e^{inh(\varphi_{+})} +\frac{1}{2}\Gamma(1)k'(0)e^{i\frac{\pi}{2}}n^{-1}e^{inh(\varphi_{+})}\\
 & -\frac{i}{n}e^{inh(0)}\frac{g(0)}{h'(0)}+\cO\left(\frac{1}{n^{3/2}}\right)
\end{align*}
with 
\[
k(0)=2^{1/2}g(\varphi_{+})\left(h''(\varphi_{+})\right)^{-1/2}
\]
and 
\[
k'(0)=-\frac{2}{h''(\varphi_{+})}g'(\varphi_{+})+\frac{2}{h''(\varphi_{+})}\frac{h^{(3)}(\varphi_{+})}{3h''(\varphi_{+})}g(\varphi_{+}).
\]
Observing that $h(0)=0$ and $h(\pi)=a\pi,$ $g(0)=(1-\lambda)^{\beta-1}$
while $g(\pi)=e^{i(\alpha+1)\pi}(1+\lambda)^{\beta-1},$ and $h'(0)=\frac{a(1-\lambda)-2\lambda}{1-\lambda}$
while $h'(\pi)=-\frac{a(1+\lambda)+2\lambda}{1+\lambda}$ we get 
\[
-\frac{i}{n}e^{inh(\pi)}\frac{g(\pi)}{h'(\pi)}=\frac{i}{n}e^{i(an+\alpha)\pi}\frac{(1+\lambda)^{\beta}}{a(1+\lambda)+2\lambda}
\]
and
\[
-\frac{i}{n}e^{inh(0)}\frac{g(0)}{h'(0)}=-\frac{i}{n}\frac{(1-\lambda)^{\beta}}{a(1-\lambda)-2\lambda}.
\]
We conclude that 
\begin{align*}
&\int_{0}^{\pi}g(\varphi)e^{inh(\varphi)}\d\varphi\\
& =\Gamma(1/2)\left(2^{1/2}g(\varphi_{+})\left(h''(\varphi_{+})\right)^{-1/2}\right)e^{i\frac{\pi}{4}}n^{-1/2}e^{inh(\varphi_{+})}\\
 & +\frac{i}{n}e^{i(an+\alpha)\pi}\frac{(1+\lambda)^{\beta}}{a(1+\lambda)+2\lambda}-\frac{i}{n}\frac{(1-\lambda)^{\beta}}{a(1-\lambda)-2\lambda}+\cO\left(\frac{1}{n^{3/2}}\right)\\
 & =e^{inh(\varphi_{+}){+}i\frac{\pi}{4}}z_{+}^{\alpha+1}\frac{(1-\lambda z_{+})^{\beta}}{z_{+}-\lambda}\left(\frac{2\abs{z_{+}-\lambda}^{4}}{n\lambda(1-\lambda^{2})\abs{1-z_{+}^{2}}}\right)^{1/2}\Gamma(1/2)\\
 & +\frac{1}{n}e^{i\left((an+\alpha)\pi+\frac{\pi}{2}\right)}\frac{(1+\lambda)^{\beta}}{a(1+\lambda)+2\lambda}-\frac{i}{n}\frac{(1-\lambda)^{\beta}}{a(1-\lambda)-2\lambda}+\cO\left(\frac{1}{n^{3/2}}\right).
\end{align*}
We set $\psi=\Arg{(1-\lambda z_{+})}$ and obtain 
\begin{align*}
 & e^{inh(\varphi_{+}){+}i\frac{\pi}{4}}z_{+}^{\alpha+1}\frac{(1-\lambda z_{+})^{\beta}}{z_{+}-\lambda}\left(\frac{2\abs{z_{+}-\lambda}^{4}}{n\lambda(1-\lambda^{2})\abs{1-z_{+}^{2}}}\right)^{1/2}\Gamma(1/2)\\
 & =\frac{\sqrt{2\pi}\abs{1-\lambda z_{+}}^{\beta-1+2}}{\sqrt{n\lambda(1-\lambda^{2})\abs{1-z_{+}^{2}}}}e^{i\left(nh(\varphi_{+}){+}\frac{\pi}{4}{+(\alpha-a)\varphi_{+}+(\beta-1)\psi+h(\varphi_{+})}\right)}\\
 & =\sqrt{\frac{2\pi}{n}}\frac{(1-\lambda^{2})^{\frac{\beta}{2}-\frac{1}{4}}}{(a+1)^{\frac{\beta}{2}}}\frac{e^{i\left((n+1)h(\varphi_{+})+(\alpha-a)\varphi_{+}+(\beta-1)\psi{+}\frac{\pi}{4}\right)}}{\left(((a+2)\lambda+a)((a+2)\lambda-a)\right){}^{\frac{1}{4}}},
\end{align*}
where we made use of 
\[
\abs{1-\lambda z_{+}}={\abs{z_{+}-\lambda}}=\sqrt{\frac{1-\lambda^{2}}{a+1}}
\]
and 
\begin{align*}
&\abs{z_{+}^{2}-1}=2\sqrt{1-\left(\frac{a+a\lambda^{2}+2\lambda^{2}}{2\lambda(a+1)}\right)^{2}}\\
&=\frac{\sqrt{(1-\lambda^{2})((a+2)\lambda+a)((a+2)\lambda-a)}}{\lambda(a+1)}.
\end{align*}
Taking the real part of $\int_{0}^{\pi}g(\varphi)e^{inh(\varphi)}\d\varphi$
completes the proof of Theorem \ref{thm:delta_negative}, point 1).\\

\item If $a\in\{-\frac{2\lambda}{1+\lambda},\,\frac{2\lambda}{1-\lambda}\}$
then $h'$ has a unique zero. If $a=\frac{2\lambda}{1-\lambda}$ then
$z_{+}=1$ and 
\[
h(0)=h'(0)=h''(0)=0,
\]
while 
\[
h^{(3)}(0)=\frac{2\lambda(1+\lambda)}{(1-\lambda)^{3}}.
\]
Applying~ \cite[Theorem 4]{AE} at the second order in this case
yields the asymptotic behavior 
\[
\int_{0}^{\pi}g(\varphi)e^{inh(\varphi)}\d\varphi=g(\varphi_{+})e^{inh(\varphi_{+}){+}i\frac{\pi}{6}}\left(\frac{6}{n\abs{h^{(3)}(\varphi_{+})}}\right)^{1/3}\frac{\Gamma(1/3)}{3}+\cO\left(\frac{1}{n^{2/3}}\right).
\]
Direct computation shows that 
\begin{align*}
&\int_{0}^{\pi}g(\varphi)e^{inh(\varphi)}\d\varphi\\ & =\frac{\Gamma(1/3)}{3}\frac{z_{+}^{\alpha+1}(1-\lambda z_{+})^{\beta}e^{i\frac{\pi}{6}}}{z_{+}-\lambda}\left(\frac{z_{+}^{a+1}(1-\lambda z_{+})}{z_{+}-\lambda}\right)^{n}\left(\frac{3(1-\lambda)^{3}}{n\lambda(1+\lambda)}\right)^{1/3}\left(1+\cO\left(\frac{1}{n^{1/3}}\right)\right)\\
 & =e^{i\frac{\pi}{6}}(1-\lambda)^{\beta}\left(\frac{3}{n\lambda(1+\lambda)}\right)^{1/3}\left(1+\cO\left(\frac{1}{n^{1/3}}\right)\right),
\end{align*}
and we conclude
\begin{align*}
&\frac{1}{\pi}\Re{\left\{ \int_{0}^{\pi}z^{\alpha+1}\frac{(1-\lambda z)^{\beta}}{z-\lambda}\bigg(z^{a+1}\frac{1-\lambda z}{z-\lambda}\bigg)^{n}\Bigg|_{z=e^{i\varphi}}\d\varphi\right\} }\\
&=\frac{(1-\lambda)^{\beta}}{3^{2/3}\Gamma(2/3)}\left(\frac{1}{n\lambda(1+\lambda)}\right)^{1/3}\left(1+\cO\left(\frac{1}{n^{1/3}}\right)\right).
\end{align*}
If $a=-\frac{2\lambda}{1+\lambda}$ then $z_{+}=-1$.
Changing the variable of integration $\varphi\mapsto-\varphi$ and making use of $\overline{g(\varphi)}=g(-\varphi)$, $\overline{e^{inh(\varphi)}}=e^{inh(-\varphi)}$ one verifies that 
\[
\int_{-\pi}^{0}g(\varphi)e^{inh(\varphi)}\d\varphi=\overline{\int_{0}^{\pi}g(\varphi)e^{inh(\varphi)}\d\varphi}
\]
so that the new saddle point $-\pi$ is the left endpoint of the interval of integration; indeed we have 
\[
h'(-\pi)=h''(-\pi)=0,\qquad h^{(3)}(-\pi)=-\frac{2\lambda(1-\lambda)}{(1+\lambda)^{3}}.
\]
Applying again~\cite[Theorem 4]{AE} yields the asymptotic
behavior 
\begin{align*}
\int_{-\pi}^{0}g(\varphi)e^{inh(\varphi)}\d\varphi=g(-\pi)e^{inh(-\pi)-i\frac{\pi}{6}}\left(\frac{6}{n\abs{h^{(3)}(-\pi)}}\right)^{1/3}\frac{\Gamma(1/3)}{3}+ \cO\left(\frac{1}{n^{2/3}}\right).
\end{align*}
With
\begin{align*}
 & g(-\pi)=e^{-i\pi(\alpha+1)}\frac{(1+\lambda)^{\beta}}{-(1+\lambda)}=(1+\lambda)^{\beta-1}e^{-i\alpha\pi},\\
 & h(-\pi)=\textnormal{Arg}\left(\frac{e^{-i\pi(a+1)}(1+\lambda)}{-(1+\lambda)}\right)=-a\pi
\end{align*}
we conclude that
\begin{align*}
 & \frac{1}{\pi}\Re{\left\{ \int_{0}^{\pi}z^{\alpha+1}\frac{(1-\lambda z)^{\beta}}{z-\lambda}\bigg(z^{a+1}\frac{1-\lambda z}{z-\lambda}\bigg)^{n}\Bigg|_{z=e^{i\varphi}}\d\varphi\right\} }\\
 & =\frac{1}{\pi}\cos{\left(\left(an+\alpha+\frac{1}{6}\right)\pi\right)}\frac{\Gamma(1/3)(1+\lambda)^{\beta}}{3^{2/3}}\left(\frac{1}{n\lambda(1-\lambda)}\right)^{1/3}+ \cO\left(\frac{1}{n^{2/3}}\right).
\end{align*}
\end{enumerate}
\end{proof}
\subsection{The Laplace-type integral via Laplace's method}

\label{sub:Laplace's-method-for} This section is devoted to the study
of the large $n$ behavior of the integral 
\[
\int_{0}^{1}\frac{(1+\lambda t)^{\beta}t^{\alpha}}{t+\lambda}\left(t^{(a+1)}\frac{1+\lambda t}{t+\lambda}\right)^{n}{\rm d}t.
\]
This is achieved using standard methods~\cite{BO,FO0} from the theory of Laplace-type integrals. The main idea behind Laplace's method is that if the real continuous function $h$ has a maximum at
a point $\xi\in[0,1]$ and if $f(\xi)\neq0$ then as $n$ grows large
only values in an immediate neighborhood of $\xi$ contribute
to the integral, see~\cite{BO,FO0}. To apply the method we introduce
for $\alpha,\beta\in\mathbb{R}$, $a>-1$ and $\lambda\in(0,1)$ (overwriting
$f,g,h$ of Section~\ref{sub:The-stationary-phase}) the real functions
\begin{align*}
t\mapsto f(t) & =\frac{(1+\lambda t)^{\beta}t^{\alpha}}{t+\lambda},\ t\in[0,1],\\
t\mapsto g(t) & =t^{a+1}\frac{1+\lambda t}{\lambda+t},\ t\in[0,1],\\
t\mapsto h(t) & =\ln{(g(t))},\ t\in(0,1].
\end{align*}
With this notation we can write out the integral in Laplace form 
\[
\int_{0}^{1}\frac{(1+\lambda t)^{\beta}t^{\alpha}}{t+\lambda}\left(t^{(a+1)}\frac{1+\lambda t}{t+\lambda}\right)^{n}{\rm d}t=\int_{0}^{1}f(t)e^{nh(t)}{\rm d}t.
\]
The first step is to gain a picture about the behavior of $h$ on
$(0,1]$. Computing derivatives we find 
\begin{align*}
h'(t) & =\frac{a+1}{t}-\frac{1}{t+\lambda}+\frac{\lambda}{1+\lambda t},\\
h''(t) & =-\frac{a+1}{t^{2}}+\frac{1}{(t+\lambda)^{2}}-\frac{\lambda^{2}}{(1+\lambda t)^{2}},\\
h'''(t) & =\frac{2a+2}{t^{3}}-\frac{2}{(t+\lambda)^{3}}+\frac{2\lambda^{3}}{(1+\lambda t)^{3}}.
\end{align*}
We note that $h'(t)=0$ holds iff 
\begin{align*}
a=\frac{t}{\lambda+t}-\frac{\lambda t}{1+\lambda t}-1.
\end{align*}
For $t$ this yields two solutions 
\begin{equation}
t_{\pm}=-\frac{a+a\lambda^{2}+2\lambda^{2}}{2\lambda(a+1)}\pm\sqrt{\left(\frac{a+a\lambda^{2}+2\lambda^{2}}{2\lambda(a+1)}\right)^{2}-1},\label{eq:tplusminus}
\end{equation}
which are real iff $(a\geq\frac{2\lambda}{1-\lambda}$ or $a\leq\frac{-2\lambda}{1+\lambda})$. 
\begin{lem}
\label{trivial} Let $h(t)=\ln{\left(t^{a+1}\frac{1+t\lambda}{t+\lambda}\right)}$,
$t\in(0,1]$, $\lambda\in(0,1)$ and let $a>-1$. For $h$ we have
that $\lim_{t\rightarrow0^{+}}h(t)=-\infty$ and $h(1)=0$ and the
following properties:
\begin{enumerate}
\item If $a>\frac{-2\lambda}{1+\lambda}$ then $h(t)\leq0$ and $h'(t)>0$
for $t\in(0,\,1]$.
\item If $a=\frac{-2\lambda}{1+\lambda}$ then $t_{\pm}=1$ and $h(t)<0$
and $h'(t)>0$ for $t\in(0,\,1)$. Further $h'(1)=h''(1)=0$ and $h^{(3)}(1)=\frac{2\lambda(1-\lambda)}{(1+\lambda)^{3}}$.
\item If $a\in(-1,\,\frac{-2\lambda}{1+\lambda})$ then $h$ has a unique
maximum at $t_{-}\in(0,\,1)$ with 
\begin{align*}
 & h(t_{-})>0,\\
 & h'(t_{-})=0,\\
 & h''(t_{-})=\frac{\lambda}{t_{-}}\left(-\frac{1}{(\lambda+t_{-})^{2}}+\frac{1}{(1+\lambda t_{-})^{2}}\right)<0.
\end{align*}
\end{enumerate}
\end{lem}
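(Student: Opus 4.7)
I reduce everything to understanding the zero set and sign of $h'$ on $(0,1]$. Clearing denominators one writes
\[
h'(t)=\frac{\lambda(a+1)\,t^2+(a(1+\lambda^2)+2\lambda^2)\,t+\lambda(a+1)}{t(t+\lambda)(1+\lambda t)},
\]
whose denominator is strictly positive on $(0,1]$, so the sign and zero set of $h'$ agree with those of the numerator $Q(t)$. By Vieta's formulas the two (possibly complex) roots $t_\pm$ of $Q$ satisfy $t_+t_-=1$, and a short computation factors the discriminant as
\[
(1-\lambda^2)\bigl[a(1-\lambda)-2\lambda\bigr]\bigl[a(1+\lambda)+2\lambda\bigr].
\]
The sign of this product distinguishes exactly the three regimes of the lemma. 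The boundary values $h(1)=0$ and $h(t)\to-\infty$ as $t\to 0^+$ follow immediately from the explicit form of $h$, since $a+1>0$ makes the $(a+1)\log t$ term dominate near $0$.

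For part (1), if $a\in(-\tfrac{2\lambda}{1+\lambda},\tfrac{2\lambda}{1-\lambda})$ the discriminant is negative, so $Q$ has no real zero; and if $a\ge\tfrac{2\lambda}{1-\lambda}$ the sum of the two real roots equals $-\tfrac{a(1+\lambda^2)+2\lambda^2}{\lambda(a+1)}<0$ while their product is $1>0$, so both are negative. In either subcase $Q$ does not vanish on $(0,1]$, and since $h'(t)\sim(a+1)/t\to+\infty$ as $t\to 0^+$, that constant sign is positive. Strict monotonicity combined with $h(1)=0$ then yields $h\le 0$ on $(0,1]$.

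For part (2), substituting $a=-\tfrac{2\lambda}{1+\lambda}$ into $Q$ with $a+1=(1-\lambda)/(1+\lambda)$ produces $Q(t)=\lambda(a+1)(t-1)^2$, so $h'>0$ on $(0,1)$ and $h'(1)=0$; strict increase together with $h(1)=0$ give $h<0$ on $(0,1)$. The factor $(t-1)^2$ in $Q$ immediately yields $h''(1)=0$, and plugging $t=1$ and $a+1=(1-\lambda)/(1+\lambda)$ into $h'''(t)=2(a+1)/t^3-2/(t+\lambda)^3+2\lambda^3/(1+\lambda t)^3$ simplifies to $\frac{2\lambda(1-\lambda)}{(1+\lambda)^3}$ via the identity $(1+\lambda)^2-(1+\lambda+\lambda^2)=\lambda$.

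For part (3), the discriminant is positive, so $Q$ has two distinct real roots $t_\pm$ with product $1$. In the regime $a\in(-1,-\tfrac{2\lambda}{1+\lambda})$ one checks $a(1+\lambda^2)+2\lambda^2<0$ (using $-\tfrac{2\lambda}{1+\lambda}<-\tfrac{2\lambda^2}{1+\lambda^2}$, which is equivalent to $\lambda<1$) while $\lambda(a+1)>0$, so the sum of the roots is positive; together with product $1$ this forces $0<t_-<1<t_+$. The sign pattern of $Q$ on $(0,\infty)$ is then $+,-,+$, so $h$ strictly increases on $(0,t_-)$, strictly decreases on $(t_-,t_+)$, and $t_-$ is the unique maximum of $h$ on $(0,1]$ with $h(t_-)>h(1)=0$. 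Finally, to obtain the stated formula for $h''(t_-)$ I use $h'(t_-)=0$ to write $(a+1)/t_-=1/(t_-+\lambda)-\lambda/(1+\lambda t_-)$, divide by $t_-$ to eliminate $(a+1)/t_-^2$ from $h''(t_-)$, and collect the remaining fractions; this reduces to
\[
h''(t_-)=\frac{\lambda}{t_-}\!\left(-\frac{1}{(t_-+\lambda)^2}+\frac{1}{(1+\lambda t_-)^2}\right),
\]
which is negative because $t_-<1$ is equivalent to $t_-+\lambda<1+\lambda t_-$. I expect the only mildly delicate step to be this last algebraic simplification; everything else is bookkeeping on the discriminant, Vieta's formulas and signs.
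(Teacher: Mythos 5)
Your proof is correct and follows essentially the same route as the paper: both reduce the analysis to the quadratic governing the zeros of $h'$ (your discriminant factorization is exactly the paper's condition for $t_{+,-}$ to be real), and both obtain the formula for $h''(t_-)$ by substituting the critical-point equation back into $h''$. The only cosmetic difference is that you pin down the location and sign pattern via Vieta's formulas and the behavior $h'(t)\sim (a+1)/t$ near $0^+$, whereas the paper argues via the monotonicity of $t\mapsto\frac{t}{\lambda+t}-\frac{\lambda t}{1+\lambda t}-1$ together with the boundary values $h(0^+)=-\infty$ and $h(1)=0$.
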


For illustration Figure~\ref{IHaPl} depicts the three cases of the lemma.
\begin{figure}[h]
\centering
\begin{subfigure}{.3\textwidth}
\centering
\includegraphics[width=\linewidth]{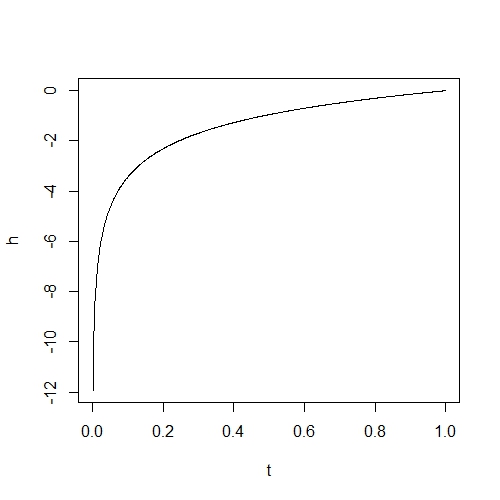}
\caption{}
\end{subfigure}
\begin{subfigure}{.3\textwidth}
\centering
\includegraphics[width=\linewidth]{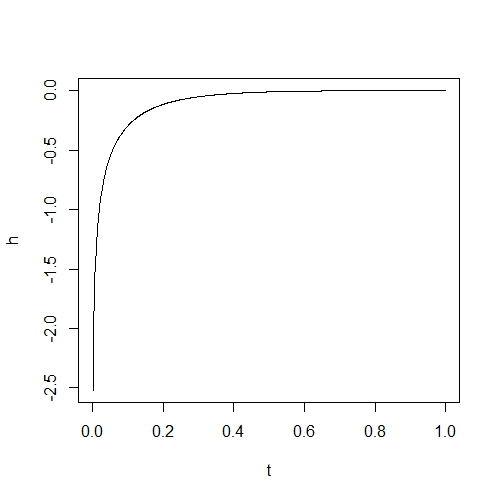}
\caption{}
\end{subfigure}
\centering
\begin{subfigure}{.3\textwidth}
\centering
\includegraphics[width=\linewidth]{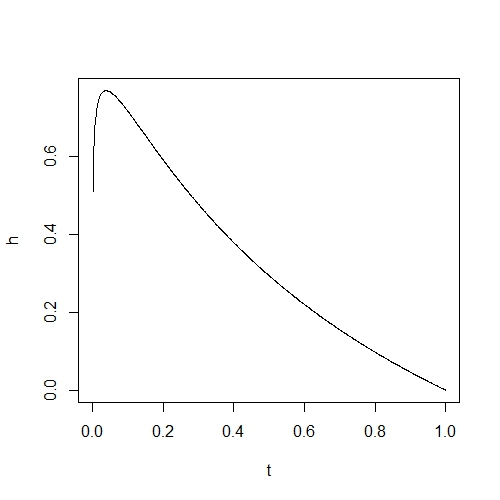}
\caption{}
\end{subfigure}
\caption{\emph{Graphs of $h$ in different parameter regions.} In all plots $\lambda=0.3$ is chosen. A) corresponds to point (1) in the lemma and the graph illustrates $a=0.9>-6/13$. B) corresponds to point (2) in the lemma and the graph illustrates $a=-6/13$. C) corresponds to point (3) in the lemma and the graph illustrates $a=-0.9<-6/13$.}
\label{IHaPl}
\end{figure}

\begin{proof}
The proof is a consequence of the from of the derivatives of $h$. 
\begin{enumerate}
\item The function $\frac{t}{\lambda+t}-\frac{\lambda t}{1+\lambda t}-1$
is increasing on $[0,1]$ and achieves the maximum $-\frac{2\lambda}{1+\lambda}$
at $t=1$. Hence, if $a>\frac{-2\lambda}{1+\lambda}$ then $h'(t)=0$
has no solution on $(0,1]$ i.e.~$h'$ is either positive or negative
on the whole interval. Since $h(0)=-\infty$ and $h(1)=0$ it follows
that $h$ is strictly increasing on $[0,1)$ and subsequently we also
have that $h(t)\leq0$ on $(0,1]$. 
\item It holds that $t_{\pm}\left(a=\frac{-2\lambda}{1+\lambda}\right)=1$
and that $h'(1)=h''(1)=0$ and $h'''(1)=\frac{2\lambda(1-\lambda)}{(1+\lambda)^{3}}$.
As in (1) we can conclude that $t=1$ is the only zero of $h'$ in
$(0,1]$ so that $h'$ is strictly positive. Furthermore it holds
that $h(t)\leq0$ on $(0,1]$. 
\item If $a\in(-1,\,\frac{-2\lambda}{1+\lambda})$ then $t_{-}\in(0,1)$
and $t_{+}>1$ so that $h'$ has a unique zero $t_{-}\in(0,1)$. Plugging
this into our formula for $h''(t)$ we find that 
\begin{align*}
h''(t_{-})=\frac{\lambda}{t_{-}}\left(-\frac{1}{(\lambda+t_{-})^{2}}+\frac{1}{(1+\lambda t_{-})^{2}}\right)<0.
\end{align*}
We conclude that $t_{-}$ is a local maximum of $h$ and that $h$
is decreasing in a neighborhood of $t_{-}$. Since $\lim_{t\rightarrow0+}h(t)=-\infty$
and $h(1)=0$ and there are no other critical points of $h$ we conclude
that $h(t_{-})$ is a global maximum of $h$ and therefore it is positive. 
\end{enumerate}
\end{proof}
We are ready to present the asymptotic behavior of the Laplace-type
integral. 
\begin{prop}
\label{Laplace} Let $\lambda\in(0,1)$, $a>-1$ and let $\alpha,\ \beta>-1$.
We have the following asymptotic behavior as $n\rightarrow\infty$: 
\begin{enumerate}
\item If $a>\frac{-2\lambda}{1+\lambda}$ then 
\begin{align*}
\int_{0}^{1}\frac{(1+\lambda t)^{\beta}t^{\alpha}}{t+\lambda}\left(t^{(a+1)}\frac{1+\lambda t}{t+\lambda}\right)^{n}{\rm d}t=\frac{1}{n}\frac{(1+\lambda)^{\beta}}{\lambda(a+2)+a}\left(1+\cO\left(\frac{1}{n}\right)\right).
\end{align*}
\item If $a=\frac{-2\lambda}{1+\lambda}$ then 
\begin{align*}
\int_{0}^{1}\frac{(1+\lambda t)^{\beta}t^{\alpha}}{t+\lambda}\left(t^{(a+1)}\frac{1+\lambda t}{t+\lambda}\right)^{n}{\rm d}t=\frac{\Gamma(1/3)}{3^{2/3}n^{1/3}}\frac{(1+\lambda)^{\beta}}{\left(\lambda(1-\lambda)\right)^{1/3}}\left(1+\cO\left(\frac{1}{n^{1/3}}\right)\right).
\end{align*}
\item If $a\in(-1,\,\frac{-2\lambda}{1+\lambda})$ then $g(t_{-})>1$ and
\begin{align*}
 & \int_{0}^{1}\frac{(1+\lambda t)^{\beta}t^{\alpha}}{t+\lambda}\left(t^{(a+1)}\frac{1+\lambda t}{t+\lambda}\right)^{n}{\rm d}t\\
 & =(1+\lambda t_{-})^{\beta+1}t_{-}^{\alpha}\left(g(t_{-})\right)^{n}\sqrt{\frac{2\pi t_{-}}{n\lambda\left((1+\lambda t_{-})^{2}-(\lambda+t_{-})^{2}\right)}}\left(1+\cO\left(\frac{1}{n^{1/2}}\right)\right),
\end{align*}
{where $t_{-}$ is defined in Equation~\eqref{eq:tplusminus}.}
\end{enumerate}
\end{prop}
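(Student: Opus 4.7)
The plan is to apply Laplace's method to the integral $I_n := \int_0^1 f(t) e^{nh(t)} \d t$ with $f$, $h$ as introduced just before the statement. Lemma~\ref{trivial} supplies the global shape of $h$ on $[0,1]$ in each of the three regimes of $a$, so the strategy reduces in each case to identifying the location and order of the maximum of $h$ and invoking the corresponding standard Laplace-type estimate.

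For point (1), $h$ is strictly increasing on $(0,1]$ with $h(1)=0$ and $h'(1)\neq 0$, so this is an ordinary right-endpoint maximum. The substitution $u=-h(t)$ bijects $(0,1]$ onto $[0,\infty)$ and rewrites $I_n = \int_0^\infty \frac{f(t(u))}{h'(t(u))}\, e^{-nu}\,\d u$; Watson's lemma then produces $I_n = \frac{f(1)}{n h'(1)}(1+\cO(n^{-1}))$, and inserting the explicit values $f(1)=(1+\lambda)^{\beta-1}$ and $h'(1)=(\lambda(a+2)+a)/(1+\lambda)$ (both immediate from the formula for $h'$ stated before Lemma~\ref{trivial}) matches the claim. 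For point (3), $h$ has an interior maximum at $t_-\in(0,1)$ with $h''(t_-)<0$, so the classical Gaussian Laplace estimate~\cite{FO0,BO} gives
\[
I_n \sim f(t_-)\, e^{n h(t_-)}\sqrt{\frac{2\pi}{n|h''(t_-)|}}.
\]
Writing $e^{n h(t_-)}=g(t_-)^n$ and using the explicit $h''(t_-)$ from Lemma~\ref{trivial}(3), the factor $\sqrt{1/|h''(t_-)|}$ contributes $(\lambda+t_-)(1+\lambda t_-)$, which cancels the denominator of $f(t_-)=(1+\lambda t_-)^\beta t_-^\alpha/(t_-+\lambda)$; this produces the claimed expression, and $g(t_-)>1$ is immediate from $h(t_-)>0$.

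The most delicate case is point (2), where the maximum is still at $t=1$ with $h(1)=0$ but $h'(1)=h''(1)=0$ and $h'''(1)=2\lambda(1-\lambda)/(1+\lambda)^3>0$. The Gaussian change of variables fails and one needs the cubic endpoint version of Laplace's method: after substituting $s=1-t$ and then $v=(n h'''(1)/6)\,s^3$, the dominant contribution reduces to $f(1)\,\Gamma(1/3)(6/(nh'''(1)))^{1/3}/3$, and the algebraic simplification $6^{1/3}/3 = 2^{1/3}/3^{2/3}$ together with the exact value of $h'''(1)$ recovers the stated constant $\Gamma(1/3)(1+\lambda)^{\beta}/(3^{2/3}(\lambda(1-\lambda))^{1/3})$. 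The main obstacle is precisely this non-standard cubic endpoint estimate, including control of the error at order $\cO(n^{-1/3})$; that control requires expanding $f$ and $h$ one order beyond their leading Taylor coefficients near $t=1$, but otherwise falls within the scope of standard asymptotic analysis.
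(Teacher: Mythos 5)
Your proposal is correct and follows essentially the same route as the paper: endpoint Laplace asymptotics at $t=1$ for cases (1) and (2) (the paper reflects $t\mapsto -t$ and cites Erd\'elyi's general endpoint theorem, which covers both the first-order and the degenerate cubic case you handle by explicit substitution) and the standard interior Gaussian Laplace formula at $t_-$ for case (3), with matching constants throughout.
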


\begin{proof}
For properties of $h$ we refer to the respective points of Lemma~\ref{trivial}.
We first rewrite our Laplace integral so that the assumptions in A. Erd\'elyi's asymptotic formula \cite[Theorem 7.1]{FO0} are met:
%
%
\begin{align*}
\int_{0}^{1}\frac{(1+\lambda t)^{\beta}t^{\alpha}}{t+\lambda}\left(t^{(a+1)}\frac{1+\lambda t}{t+\lambda}\right)^{n}{\rm d}t & =\int_{-1}^{0}\frac{(1-\lambda t)^{\beta}(-t)^{\alpha}}{\lambda-t}\left((-t)^{(a+1)}\frac{1-\lambda t}{\lambda-t}\right)^{n}{\rm d}t\\
 & =\int_{-1}^{0}\tilde{f}(t)e^{-n\tilde{h}(t)}{\rm d}t,
\end{align*}
where 
\begin{align*}
\tilde{f}(t) & =f(-t)=\frac{(1-\lambda t)^{\beta}(-t)^{\alpha}}{\lambda-t},\\
\tilde{h}(t) & =-h(-t)=-\ln\left((-t)^{a+1}\frac{1-\lambda t}{\lambda-t}\right).
\end{align*}
\begin{enumerate}
\item If $a>\frac{-2\lambda}{1+\lambda}$ then $\tilde{h}(-1)=0$, $\lim_{t\rightarrow0^{+}}\tilde{h}(t)=+\infty$
and for $t\in[-1,0)$ we have $\tilde{h}(t)\geq0$ and $\tilde{h}'(t)>0$.
Applying Erdélyi's result we get 
\begin{align*}
\int_{-1}^{0}\tilde{f}(t)e^{-n\tilde{h}(t)}{\rm d}t & =\frac{\tilde{f}(-1)}{n\tilde{h}'(-1)}e^{n\tilde{h}(-1)}\left(1+\cO\left(\frac{1}{n}\right)\right)\\
 & =\frac{1}{n}\frac{(1+\lambda)^{\beta}}{\lambda(a+2)+a}\left(1+\cO\left(\frac{1}{n}\right)\right)
\end{align*}
which completes the proof. 
\item If $a=-\frac{2\lambda}{1+\lambda}$ then $\tilde{h}$ is positive
and strictly increasing on $[-1,0)$ with $\tilde{h}(-1)=0$, $\lim_{t\rightarrow0^{-}}\tilde{h}(t)=\infty$.
Moreover $\tilde{h}'(-1)=\tilde{h}''(-1)=0$ while $\tilde{h}^{(3)}(-1)=h^{(3)}(1)=\frac{2\lambda(1-\lambda)}{(1+\lambda)^{3}}>0$.
Applying again Erdélyi's result we get 
\begin{align*}
\int_{-1}^{0}\tilde{f}(t)e^{-n\tilde{h}(t)}{\rm d}t & =\tilde{f}(-1)e^{-n\tilde{h}(-1)}\frac{\Gamma(1/3)}{3}\left(\frac{3!}{n\tilde{h}^{(3)}(-1)}\right)^{1/3}\left(1+\cO\left(\frac{1}{n^{1/3}}\right)\right)\\
 & =\frac{(1+\lambda)^{\beta-1}}{n^{1/3}}\frac{\Gamma(1/3)}{3^{2/3}}\frac{1+\lambda}{\left(\lambda(1-\lambda)\right)^{1/3}}\left(1+\cO\left(\frac{1}{n^{1/3}}\right)\right)\\
 & =\frac{1}{n^{1/3}}\frac{\Gamma(1/3)}{3^{2/3}}\frac{(1+\lambda)^{\beta}}{\left(\lambda(1-\lambda)\right)^{1/3}}\left(1+\cO\left(\frac{1}{n^{1/3}}\right)\right).
\end{align*}
\item If $a\in(-1,\,\frac{-2\lambda}{1+\lambda})$ then $t_{-}$ is the
unique maximum of $h$ on $(0,1)$ with $g(t_{-})=e^{h(t_{-})}>1$.
Applying a standard result \cite[Formula (6.4.19c)]{BO} we obtain
\begin{align*}
&\int_{0}^{1}\frac{(1+\lambda t)^{\beta}t^{\alpha}}{t+\lambda}\left(t^{(a+1)}\frac{1+\lambda t}{t+\lambda}\right)^{n}{\rm d}t\\
&=\frac{(1+\lambda t_{-})^{\beta}t_{-}^{\alpha}}{t_{-}+\lambda}\sqrt{\frac{2\pi}{-nh''(t_{-})}}\left(g(t_{-})\right)^{n}\left(1+\cO\left(\frac{1}{n^{1/2}}\right)\right).
\end{align*}
Moreover 
\begin{align*}
\sqrt{\frac{2\pi}{-nh''(t_{-})}}=(\lambda+t_{-})(1+\lambda t_{-})\sqrt{\frac{2\pi t_{-}}{n\lambda\left((1+\lambda t_{-})^{2}-(\lambda+t_{-})^{2}\right)}}
\end{align*}
and the result follows. 
\end{enumerate}
\end{proof}

\section{Simple estimates in the exponential regions}

\label{sec:Simple-exponential-bounds} For applications it is often useful to identify parameter regions of exponential decay and to have simple upper estimates in such cases. The accumulated contribution of those regions is usually asymptotically
negligible with the dominant contribution arising from the asymptotically
larger parameter regions. The following proposition contains
a simple characteristics of parameter regions of exponential behavior.
\begin{prop}
\label{Second_thm} Let $\alpha,\ \beta>-1$, $a>-1$, $\lambda\in(0,\,1)$ and $n$ be a positive integer. 
\begin{enumerate}
\item We assume that $an+\alpha$ is an integer.
\begin{enumerate}
\item If $a>\frac{2\lambda}{1-\lambda}$ then there is $x^{*}\in(\lambda,1)$
such that $\left(x^{*}\right)^{a+1}\frac{1-\lambda x^{*}}{x^{*}-\lambda}\in(0,\,1)$
and 
\begin{align*}
 & \Abs{\lambda^{an+\alpha}(1-\lambda^{2})^{\beta}P_{n}^{(an+\alpha,\,\beta)}(1-2\lambda^{2})}\\
 & \leq\left(x^{*}\right)^{\alpha+1}\frac{(1+\lambda x^{*})^{\beta+1}}{(1-x^{*}\lambda)(x^{*}-\lambda)}\left(\left(x^{*}\right)^{a+1}\frac{1-\lambda x^{*}}{x^{*}-\lambda}\right)^{n}.
\end{align*}
$x^{*}$ is computed explicitly below.
\item If $-1<a<\frac{-2\lambda}{1+\lambda}$ then there is $x^{*}\in(1,1/\lambda)$
such that $\left(x^{*}\right)^{a+1}\frac{1+\lambda x^{*}}{x^{*}+\lambda}\in(0,1)$
and 
\begin{align*}
 & \Abs{\lambda^{an+\alpha}(1-\lambda^{2})^{bn+\beta}P_{n}^{(an+\alpha,\,\beta)}(1-2\lambda^{2})}\\
 & \leq\left(x^{*}\right)^{\alpha+1}\frac{(1+\lambda x^{*})^{\beta+1}}{(1-x^{*}\lambda)(x^{*}-\lambda)}\left(\left(x^{*}\right)^{a+1}\frac{1+\lambda x^{*}}{x^{*}+\lambda}\right)^{n}.
\end{align*}
$x^{*}$ is computed explicitly below.
\end{enumerate}
\item We assume that $an+\alpha$ is not an integer. 
\begin{enumerate}
\item If $a>\frac{2\lambda}{1-\lambda}$ then there is $x^{*}\in(0,1)$
such that $\left(x^{*}\right)^{a+1}\frac{1-\lambda x^{*}}{x^{*}-\lambda},\left(x^{*}\right)^{a+1}\frac{1+\lambda x^{*}}{x^{*}+\lambda}\in(0,1)$
and
\begin{align*}
 & \abs{\lambda^{an+\alpha}(1-\lambda^{2})^{\beta}P_{n}^{(an+\alpha,\,\beta)}(1-2\lambda^{2})}\\
 & \leq \left(x^{*}\right)^{\alpha+1}\frac{(1+\lambda x^{*})^{\beta+1}}{(1-x^{*}\lambda)(x^{*}-\lambda)}\\&\cdot\left[\left(\left(x^{*}\right)^{a+1}\frac{1-\lambda x^{*}}{x^{*}-\lambda}\right)^{n}+\left(\left(x^{*}\right)^{a+1}\frac{1+\lambda x^{*}}{x^{*}+\lambda}\right)^{n}\right].
\end{align*}
$x^{*}$ is computed explicitly below.
\item If $a\in(-1,-\frac{2\lambda}{1+\lambda})$ and if the sequence $(an+\alpha)_n$ is separate from integers then the quantity 
\[
\lambda^{an+\alpha}P_{n}^{(an+\alpha,\,\beta)}(1-2\lambda^{2})
\]
increases exponentially in magnitude. The precise behavior depends on the proximity of the sequence $(an+\alpha)_n$ to integers, see the proof of Theorem~\ref{gbar} point (4). 
\end{enumerate}
\end{enumerate}
\end{prop}

\begin{proof}[Proof of Proposition~\ref{Second_thm}, point 1]
Since $an+\alpha$ is assumed to be an integer it is sufficient to consider the Fourier integral. Clearly 
\begin{align*}
\Abs{\frac{1}{\pi}\Re{\left\{ \int_{0}^{\pi}z^{\alpha+1}\frac{(1-\lambda z)^{\beta}}{z-\lambda}\bigg(z^{a+1}\frac{1-\lambda z}{z-\lambda}\bigg)^{n}\Bigg|_{z=xe^{i\varphi}}\d\varphi\right\} }}\leq\max_{\Abs{z}=x}\left(\Abs{z^{\alpha+1}\frac{(1-\lambda z)^{\beta}}{z-\lambda}}\Abs{z^{a+1}\frac{1-\lambda z}{z-\lambda}}^{n}\right).
\end{align*}
Recall that for $z,w\in\mathbb{D}$ it holds that~\cite[Formula (1.11)]{JG}
\begin{align*}
\frac{\abs{z}-\abs{w}}{1-\abs{z}\abs{w}}\leq\Abs{\frac{z-w}{1-\bar{w}z}}\leq\frac{\abs{z}+\abs{w}}{1+\abs{z}\abs{w}}.
\end{align*}
This implies that 
\[
\max_{\abs{z}=x}\Abs{z^{a+1}\frac{1-\lambda z}{z-\lambda}}\leq\begin{cases}
x^{a+1}\frac{1-\lambda x}{x-\lambda},\ x\in(\lambda,1]\\
x^{a+1}\frac{1+\lambda x}{x+\lambda},\ x\in(1,\infty)
\end{cases}.
\]
\begin{enumerate}[(a)]
\item Suppose that $a>\frac{2\lambda}{1-\lambda}$. The derivative of $x\mapsto x^{a+1}\frac{1-\lambda x}{x-\lambda}$
has zeros at
\[
x_{\pm}=\frac{a+(a+2)\lambda^{2}}{2\lambda(a+1)}\pm\sqrt{\left(\frac{a+(a+2)\lambda^{2}}{2\lambda(a+1)}\right)^{2}-1}.
\]
Notice that $x_{\pm}(a=\frac{2\lambda}{1-\lambda})=1$ and that $x_{-}(a\rightarrow\infty)\rightarrow\lambda$,
$x_{+}(a\rightarrow\infty)\rightarrow1/\lambda$. The function 
\[
a\mapsto y(a)=\frac{a+(a+2)\lambda^{2}}{2\lambda(a+1)},\ a>-1
\]
is increasing for $\lambda\in(0,1)$. The functions
$y\mapsto y\pm\sqrt{y^{2}-1}$ are, respectively, increasing/ decreasing
for $y>1$. It follows that $x_{-}(a)$ is decreasing and $x_{+}(a)$
is increasing for $a>\frac{2\lambda}{1-\lambda}$. Consequently $x_{-}\in(\lambda,1)$
and $x_{+}\in(1,1/\lambda)$. Choose $x^{*}=x_{-}$ and notice that
the second derivative of $x\mapsto x^{a+1}\frac{1-\lambda x}{x-\lambda}$ is strictly positive at $x^{*}$;
Moreover $\lim_{x\rightarrow\lambda^{+}}x^{a+1}\frac{1-\lambda x}{x-\lambda}=+\infty$
and $x^{a+1}\frac{1-\lambda x}{x-\lambda}\Big|_{x=1}=1$. Taken together
these properties are only possible if $\left(x^{*}\right)^{a+1}\frac{1-\lambda x^{*}}{x^{*}-\lambda}<1$.
\item Suppose that $a<-\frac{2\lambda}{1+\lambda}$. The derivative of $x\mapsto x^{a+1}\frac{1+\lambda x}{x+\lambda}$
has zeros at 
\[
x_{\pm}=-\frac{a+(a+2)\lambda^{2}}{2\lambda(a+1)}\pm\sqrt{\left(\frac{a+(a+2)\lambda^{2}}{2\lambda(a+1)}\right)^{2}-1}.
\]
Notice that $x_{\pm}(a=-\frac{2\lambda}{1+\lambda})=1$ and that $x_{-}(a\rightarrow-1)\rightarrow0$,
$x_{+}(a\rightarrow-1)\rightarrow\infty$. The function \[
a\mapsto y(a)=\frac{a+(a+2)\lambda^{2}}{2\lambda(a+1)}, \ a>-1
\]
is increasing for $\lambda\in(0,1)$. The functions $y\mapsto-y\pm\sqrt{y^{2}-1}$ are,
respectively, decreasing/ increasing for $y<-1$. It follows that $x_{-}(a)$ is increasing and $x_{+}(a)$
is decreasing on $(-1,-\frac{2\lambda}{1+\lambda})$.
Consequently $x_{-}\in(0,1)$ and $x_{+}\in(1,\infty)$. Fix $\epsilon>0$, choose $x^{*}=\min\{x_{+},1/\lambda-\epsilon\}$ and notice that the second derivative of $x\mapsto x^{a+1}\frac{1+\lambda x}{x+\lambda}$ is strictly positive at $x_{+}$; Moreover $\lim_{x\rightarrow\infty}x^{a+1}\frac{1+\lambda x}{x+\lambda}=+\infty$
and $x^{a+1}\frac{1+\lambda x}{x+\lambda}\Big|_{x=1}=1$. Taken together
these properties are only possible if $\left(x^{*}\right)^{a+1}\frac{1+\lambda x^{*}}{x^{*}+\lambda}<1$.

\end{enumerate}
\end{proof}
\begin{proof}[Proof of Proposition~\ref{Second_thm}, point 2] Since $an+\alpha$ is not an integer both the Fourier and the Laplace integral contribute. The generalized Fourier integral is estimated above. 
\begin{enumerate}[(a)]
\item Suppose that $a>\frac{2\lambda}{1-\lambda}$. By Lemma~\ref{trivial}, point (1), the function $g:\:t\mapsto t^{(a+1)}\frac{1+\lambda t}{t+\lambda}$
is strictly increasing on $(0,1]$ and $g(t)\leq1$ for $t\in(0,1]$.
Consequently, with $x^{*}$ chosen as in~{the proof of Proposition~\ref{Second_thm}, point
(1a)}, it holds that $\left(x^{*}\right)^{a+1}\frac{1+\lambda x^{*}}{x^{*}+\lambda}<1$.
It follows that
\begin{align*}
\int_{0}^{x^{*}}\frac{(1+\lambda t)^{\beta}t^{\alpha}}{t+\lambda}\left(t^{a+1}\frac{1+\lambda t}{t+\lambda}\right)^{n}{\rm d}t\leq\frac{(x^{*})^{\alpha+1}(1+\lambda x^{*})^{\beta+1}}{(x^{*}-\lambda)(1-\lambda x^{*})}\left(\left(x^{*}\right)^{a+1}\frac{1+\lambda x^{*}}{x^{*}+\lambda}\right)^{n}
\end{align*}
which, combined with the estimate for the generalized Fourier integral,
proves (2). 
\item Suppose that $-1<a<-2\lambda/(1+\lambda)$. Choose $x^{*}$ as in~{the proof of Proposition~\ref{Second_thm}, point (1b)}. By Proposition~\ref{Laplace}, point 3, and the same reasoning as in \emph{(a)} the Laplace integral increases exponentially in magnitude. At the same time the Fourier integral decays exponentially. This proves the assertion in case the sequence $(an+\alpha)_n$ is separate from integers. In general we refer to formula~\eqref{eq:new_asymp_formula}, see the proof of Theorem~\ref{gbar} point 4) below.
\end{enumerate}
\end{proof}

\section{Proof of Theorem \ref{gbar} and Comments}
\label{proofSection}
By Lemma~\ref{striking} the asymptotic behavior of the JPVFPs can be obtained as a sum of the asymptotic formulas for the Fourier and Laplace-type integrals. Depending on the region of parameters either the one or the other (or both) provide the leading order contribution, which results in the diverse asymptotic behavior in regions 1)-4). It should be noticed that it is still a delicate question, which summand in the expansion of the JPVFPs actually provides the dominating contribution. Consider for instance the first order term in point 1),
\begin{align}\label{four}
\sqrt{\frac{2}{n\pi}}\cos\left(((n+1)h(\varphi_{+})+(\alpha-a)\varphi_{+}+(\beta-1)\psi{+}\frac{\pi}{4}\right),
\end{align}
which is clearly $\cO(1/n^{1/2})$, while the next term is $\cO(1/n^{3/2})$. It is, in principle, possible that a subsequence exists such that the above is even $\cO(1/n^{1/2})$. We illustrate this phenomenon with a brief lemma.
\begin{lem}\label{dio}
Let $\gamma>0$ be fixed. There exists a sequence $s_n$ of natural numbers so that $\abs{\cos(\gamma s_n + \frac{\pi}{2})}\leq\frac{K}{s_n}$.
\end{lem}
\begin{proof}[Proof of Lemma~\ref{dio}]
We show that the sequence $a_n=n\cos(\gamma n + \frac{\pi}{2})$ has a convergent subsequence. Fix arbitrary $n$ and let, by Dirichlet's theorem on Diophantine approximation, 
$p_n, q_n\in\mathbb{N}$ be chosen so that
\begin{align*}
\Abs{q_n\frac{\pi}{\gamma} - p_n}<\frac{1}{n},\quad q_n\leq n.
\end{align*}
The subsequence $a_{p_n}$ is bounded because
\begin{align*}
\abs{a_{p_n}}=\Abs{p_n\cos(\gamma p_n  + \frac{\pi}{2})}
= p_n\Abs{\sin\left(\gamma p_n - q_n \pi\right)}
\leq \left(n\frac{\pi}{\gamma}+\frac{1}{n}\right)\frac{\gamma}{n}\leq \pi + \gamma.
\end{align*}
By the Bolzano-Weierstrass theorem $a_{p_n}$ has a convergent subsequence $a_{s_n}$.
\end{proof}
%
%
%
%
%
\begin{proof}[Proof of Theorem~\ref{gbar}]
\leavevmode
\begin{enumerate}
\item Suppose $a\in(-\frac{2\lambda}{1+\lambda},\,\frac{2\lambda}{1-\lambda})$ and
choose $x=1$ in Lemma~\ref{striking}. Comparing the second order term of the generalized Fourier integral in Proposition~\ref{thm:delta_negative}, point (1) to the first order term of the Laplace integral in Proposition~\ref{Laplace}, point (1),
we observe an exact cancellation. We are therefore left with the first order term of the generalized Fourier integral, which is proportional to~\eqref{four} and an additional contribution of $\cO(1/n^{3/2})$.

\item Suppose $a=\frac{2\lambda}{1-\lambda}$ and choose $x=1$ in Lemma~\ref{striking}. By Proposition~\ref{Laplace}, point (1), the contribution of the Laplace-type integral for $a>-\frac{2\lambda}{1+\lambda}$ is $\cO(1/n)$. By Proposition~\ref{thm:delta_negative}, point (2), the leading contribution to the generalized Fourier integral is $\Theta(1/n^{1/3})$. As a consequence only the Fourier integral determines the leading order behavior in this region.
Suppose $a=-\frac{2\lambda}{1+\lambda}$ and choose $x=1$ in Lemma~\ref{striking}. By Proposition~\ref{Laplace}, point (2), and by Proposition~\ref{thm:delta_negative}, point (2), both integrals are of order $\cO(1/n^{1/3})$. Summing the contributions according to Lemma~\ref{striking} yields the formula claimed in the theorem.

\item If $an+\alpha$ is an integer then the Laplace-type integral does not contribute. The exponential decay of the Fourier-type integral in the region $a\in(-1,\frac{-2\lambda}{1+\lambda})\cup(\frac{2\lambda}{1-\lambda},\infty)$ has been demonstrated as part of Proposition~\ref{Second_thm}, which proves the statement of the theorem. For completeness let us mention that an application of the well-established saddle point method
~\cite[Chapter II]{RW} to the Fourier-type integral yields
\begin{align*}
 & \lambda^{an+\alpha}(1-\lambda^{2})^{\beta}P_{n}^{(an+\alpha,\,\beta)}(1-2\lambda^{2})\\
 & =\sqrt{\frac{1}{2n\pi(a+1)}}\frac{1}{\Delta^{1/4}}\frac{z_{-}^{\alpha+1}(1-\lambda z_{-})^{\beta}}{z_{-}-\lambda}\left(\frac{z_{-}^{a+1}(1-\lambda z_{-})}{z_{-}-\lambda}\right)^{n}\left(1+\cO\left(\frac{1}{n^{1/2}}\right)\right),
\end{align*}
where 
\[
\Delta=\left(a+\frac{2\lambda}{1+\lambda}\right)\left(a-\frac{2\lambda}{1-\lambda}\right),
\]
\[
z_{-}=\frac{a+a\lambda^{2}+2\lambda^{2}}{2\lambda(a+1)}-\sqrt{\left(\frac{a+a\lambda^{2}+2\lambda^{2}}{2\lambda(a+1)}\right)^{2}-1},
\]
and $\Abs{\frac{z_{-}^{a+1}(1-\lambda z_{-})}{z_{-}-\lambda}}<1$.
This is a verbatim adaptation of \cite[Theorem 1, (2) and (4)]{BKZ}.

\item If $an+\alpha$ is not an integer then the Laplace-type integral does contribute.
If $a>\frac{2\lambda}{1-\lambda}$ it can be checked that the Laplace integral decays faster than the Fourier integral and the above formula remains valid. For $a\in(-1,\frac{-2\lambda}{1+\lambda})$ the Laplace integral grows exponentially, while the Fourier integral decays exponentially, which proves the statement of the theorem. For completeness let us mention that Proposition~\ref{Laplace}, point 3) entails that 
\begin{align}
 & \lambda^{an+\alpha}(1-\lambda^{2})^{\beta}P_{n}^{(an+\alpha,\,\beta)}(1-2\lambda^{2})\label{eq:new_asymp_formula}\\
 & =-\frac{{\sin\left(\pi(an+\alpha)\right)}}{\pi}(1+\lambda t_{-})^{\beta+1}t_{-}^{\alpha}\left(g(t_{-})\right)^{n}\sqrt{\frac{2\pi t_{-}}{n\lambda\left((1+\lambda t_{-})^{2}-(\lambda+t_{-})^{2}\right)}}\nonumber \\
 & +\cO\left(\frac{(g(t_{-}))^{n}}{n}\right),\nonumber 
\end{align}
where $t_{-}$ is given in \eqref{eq:tplusminus}, $g(t_{-})>1$.
The precise asymptotic behavior of this quantity depends on the specific subsequence of natural numbers, as the proximity of this subsequence to the zeros of the sine term potentially tempers the rate of growth.
\end{enumerate}
\end{proof}
%
%
%
%

\section{Numerical experiments}

\label{numericalExperiments}\label{numericalExperiments}
We illustrate the findings of our main Theorem~\ref{gbar} with numerical plots in the respective parameter regions. In all cases the numerical experiments are consistent with the theoretical work. Results were obtained in MAPLE2020 and using the NumPy and Matplotlib packages of Python.

Figure~\ref{TheoremRegion1} illustrates Theorem~\ref{gbar}, point (1). JPFVPs of varying degree $n$ are evaluated numerically and compared to our approximation for two sets of parameters.

Figure~\ref{TheoremRegion2a} illustrates Theorem~\ref{gbar}, point (2) at the right boundary, $a=2\lambda/(1-\lambda)$.

Figure~\ref{TheoremRegion2b} illustrates Theorem~\ref{gbar}, point (2) at the left boundary, $a=-2\lambda/(1+\lambda)$.

Figure~\ref{TheoremRegion3} illustrates the exponential region $a\in(-1,\frac{-2\lambda}{1+\lambda})\cup(\frac{2\lambda}{1-\lambda},\infty)$ of Theorem~\ref{gbar}, point (3). For two sets of parameters the JPFVPs are evaluated and displayed on a subsequence of degrees, where $an+\alpha$ is an integer.

Figure~\ref{TheoremRegion4} illustrates the exponential region $a\in(-1,\frac{-2\lambda}{1+\lambda})\cup(\frac{2\lambda}{1-\lambda},\infty)$ of Theorem~\ref{gbar}, point (4). For three sets of parameters the JPFVPs are evaluated and displayed on a subsequence of degrees, where $an+\alpha$ is not an integer. In accordance with the claim of the theorem exponential decay is observed if $a>\frac{2\lambda}{1-\lambda}$. However if $a<\frac{-2\lambda}{1+\lambda}$ the JPFVPs show exponential growth.
\begin{figure}[h]
\centering
\begin{subfigure}{\textwidth}
\centering
\includegraphics[width=0.9\linewidth]{./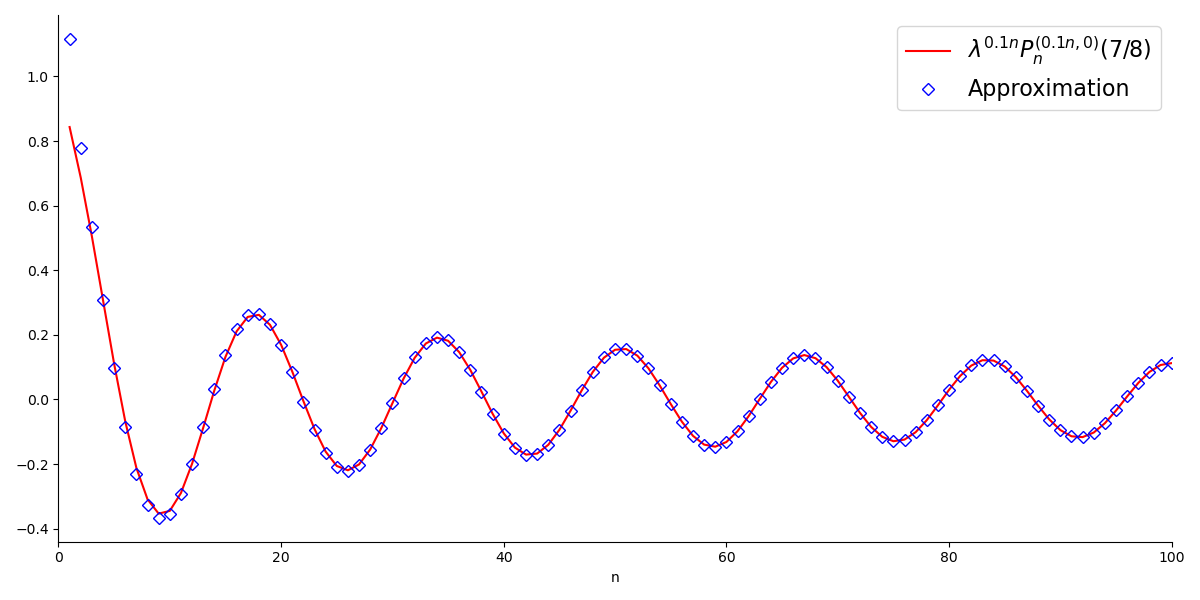}
\caption{$\lambda=\frac{1}{4}$, $a=0.1$, $\alpha=\beta=0$}
\end{subfigure}
\begin{subfigure}{\textwidth}
\centering
\includegraphics[width=0.9\linewidth]{./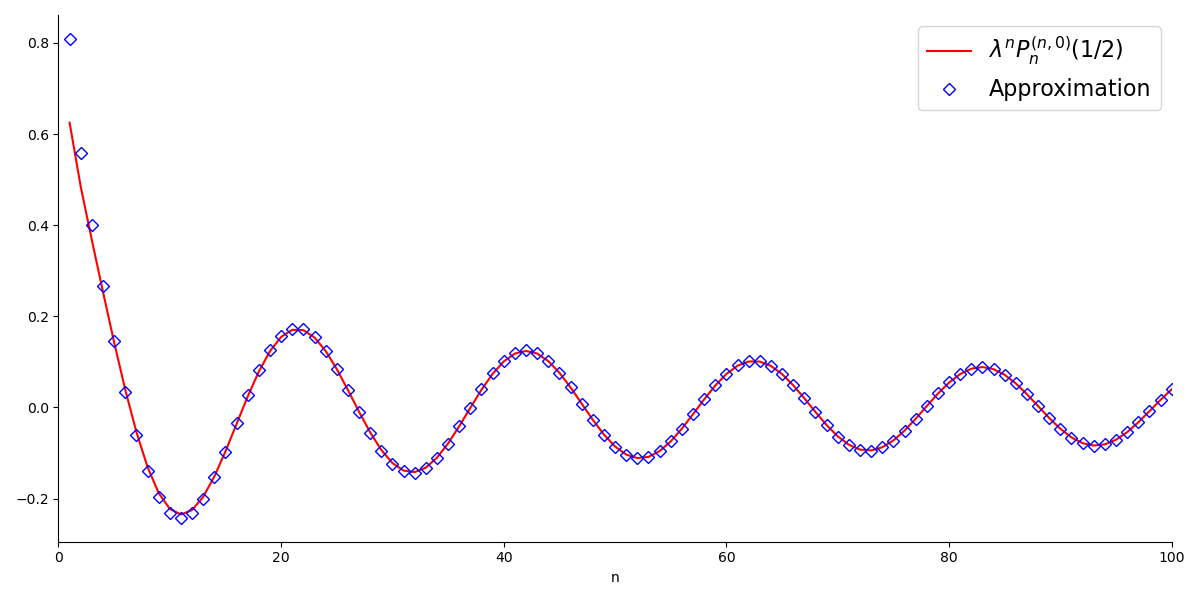}
\caption{$\lambda=\frac{1}{2}$, $a=1$, $\alpha=\beta=0$}
\end{subfigure}
\centering
\caption{Illustration of Theorem \ref{gbar}, point (1). Numerical computation of $n$-dependence of JPVFPs and approximation for two sets of parameters.}
\label{TheoremRegion1}
\end{figure}
\newpage

\begin{figure}[h]
\centering
\begin{subfigure}{\textwidth}
\centering
\includegraphics[width=0.9\linewidth]{./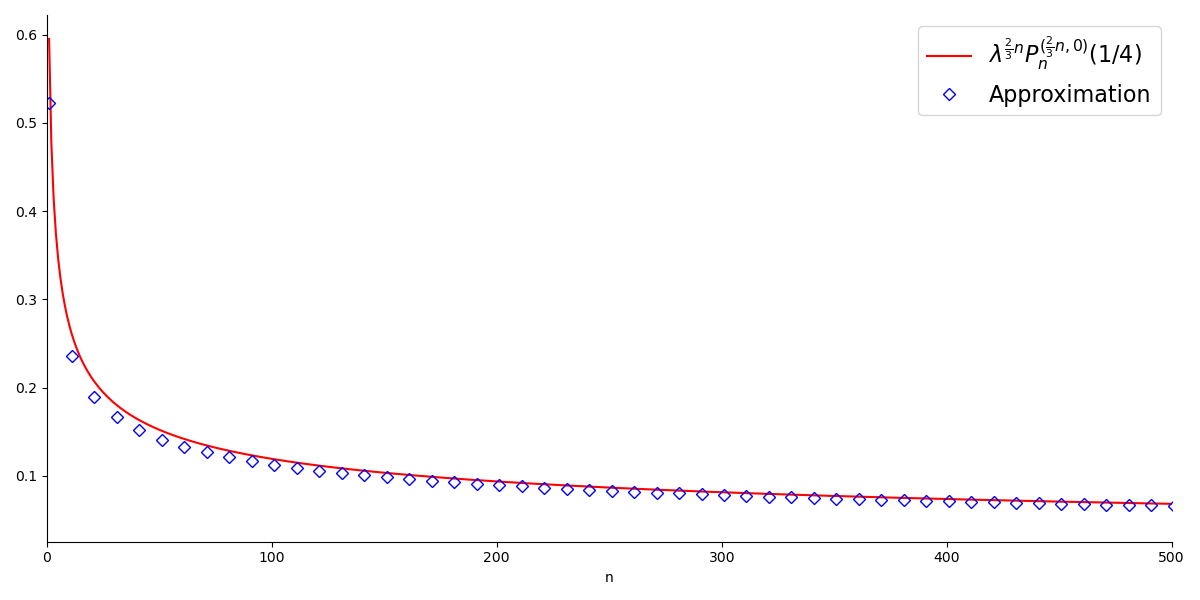}
\caption{$\lambda=\frac{1}{4}$, $a=\frac{2}{3}$, $\alpha=\beta=0$}
\end{subfigure}
\begin{subfigure}{\textwidth}
\centering
\includegraphics[width=0.9\linewidth]{./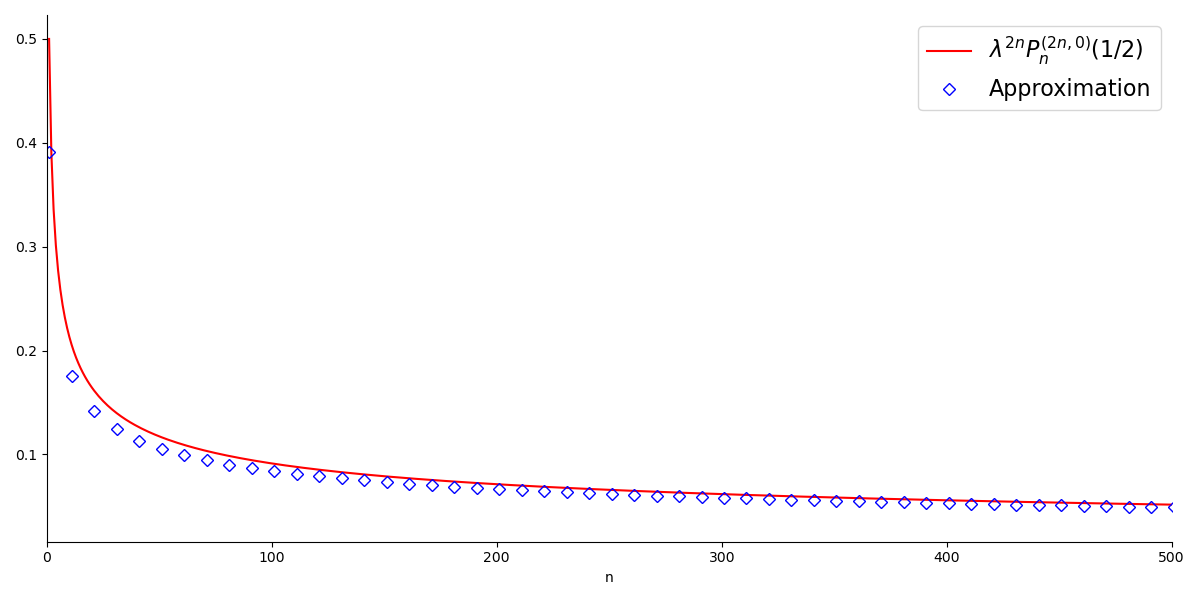}
\caption{$\lambda=\frac{1}{2}$, $a=2$, $\alpha=\beta=0$}
\end{subfigure}
\centering
\caption{Illustration of Theorem \ref{gbar}, point (2). Numerical computation of $n$-dependence of JPVFPs and approximation at the right boundary. In case of the approximation only every 10th data point is displayed.}
\label{TheoremRegion2a}
\end{figure}
\newpage

\begin{figure}[h]
\centering
\begin{subfigure}{\textwidth}
\centering
\includegraphics[width=0.9\linewidth]{./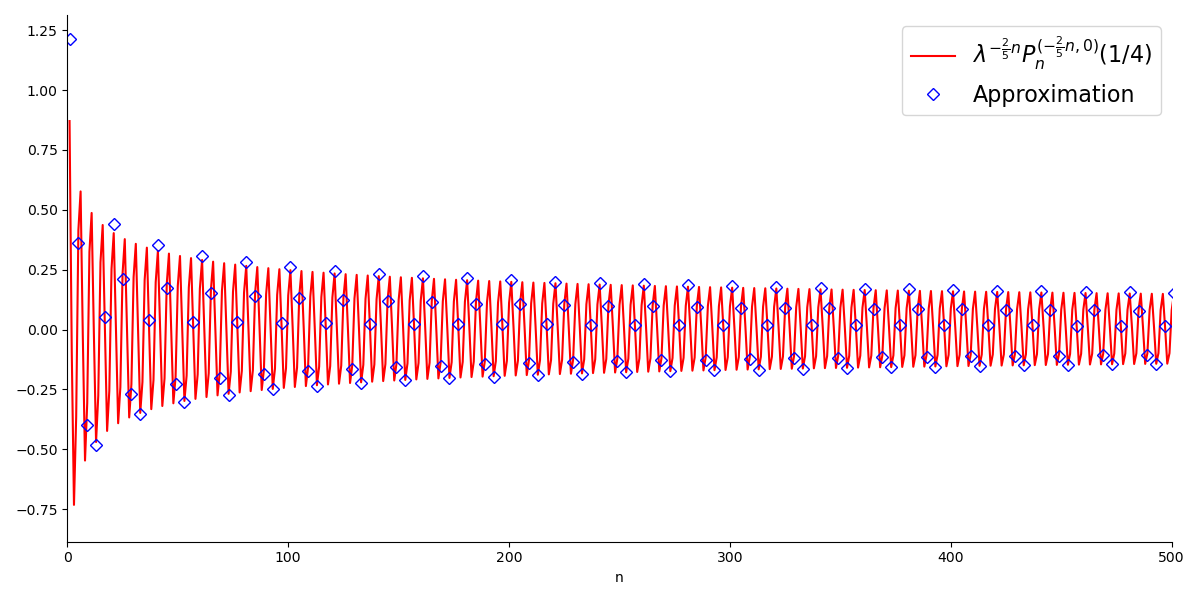}
\caption{$\lambda=\frac{1}{4}$, $a=-\frac{2}{5}$, $\alpha=\beta=0$}
\end{subfigure}
\begin{subfigure}{\textwidth}
\centering
\includegraphics[width=0.9\linewidth]{./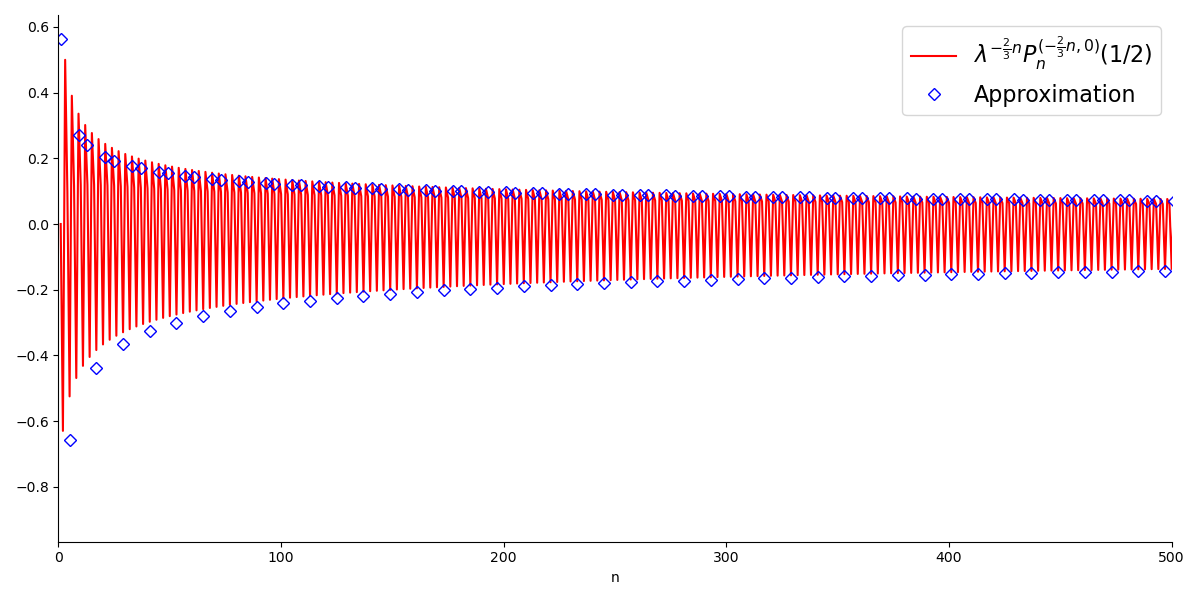}
\caption{$\lambda=\frac{1}{2}$, $a=-\frac{2}{3}$, $\alpha=\beta=0$}
\end{subfigure}
\centering
\caption{Illustration of Theorem \ref{gbar}, point (2). Numerical computation of $n$-dependence of JPVFPs and approximation at the left boundary. In case of the approximation only every 4th data point is displayed.}
\label{TheoremRegion2b}
\end{figure}
\newpage


\begin{figure}[h]
\centering
\begin{subfigure}{\textwidth}
\centering
\includegraphics[width=0.9\linewidth]{./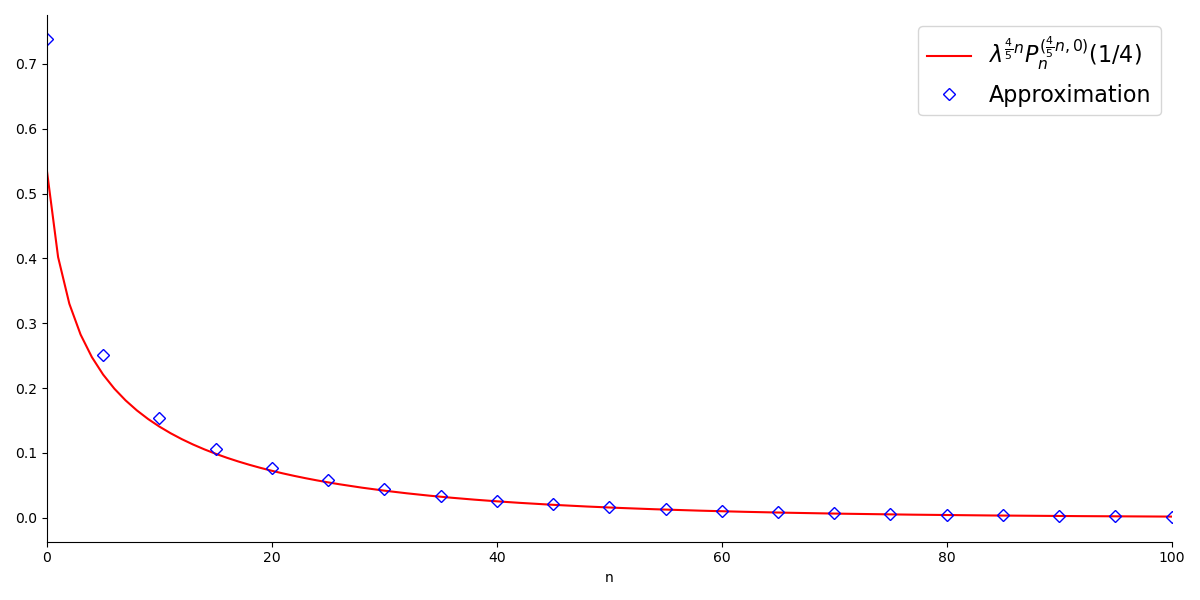}
\caption{$\lambda=\frac{1}{4}$, $a=\frac{4}{5}>\frac{2}{3}$, $\alpha=\beta=0$}
\end{subfigure}
\begin{subfigure}{\textwidth}
\centering
\includegraphics[width=0.9\linewidth]{./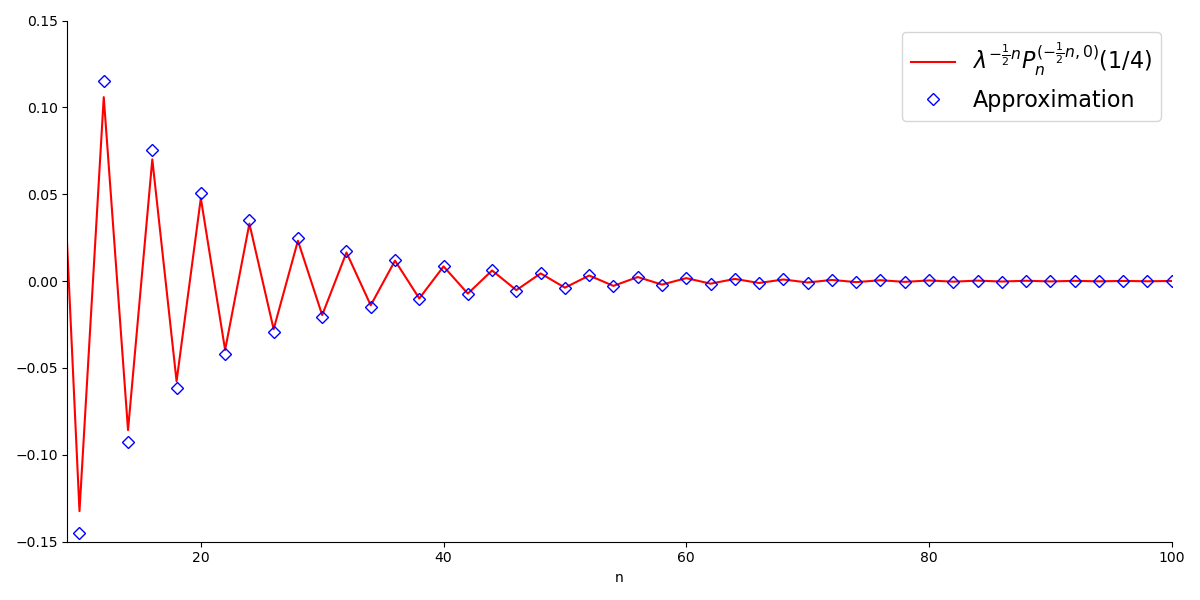}
\caption{$\lambda=\frac{1}{4}$, $a=-\frac{1}{2}<-\frac{2}{5}$, $\alpha=\beta=0$}
\end{subfigure}
\centering
\caption{Illustration of Theorem \ref{gbar}, point (3). Numerical computation of $n$-dependence of JPVFPs and approximation in the exponential region. Data points are displayed only for such $n$ that $an+\alpha$ is an integer.}
\label{TheoremRegion3}
\end{figure}
\newpage

\begin{figure}[h]
\centering
\begin{subfigure}{\textwidth}
\centering
\includegraphics[width=0.9\linewidth]{./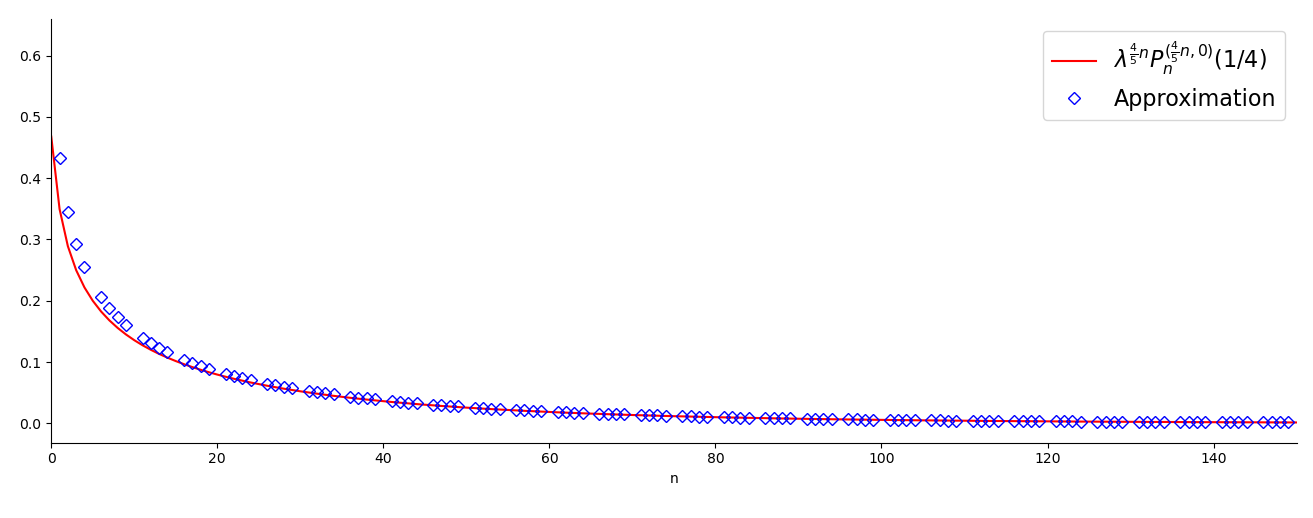}
\caption{$\lambda=\frac{1}{4}$, $a=\frac{4}{5}>\frac{2}{3}$, $\alpha=\beta=0$}
\end{subfigure}
\centering
\begin{subfigure}{\textwidth}
\centering
\includegraphics[width=0.9\linewidth]{./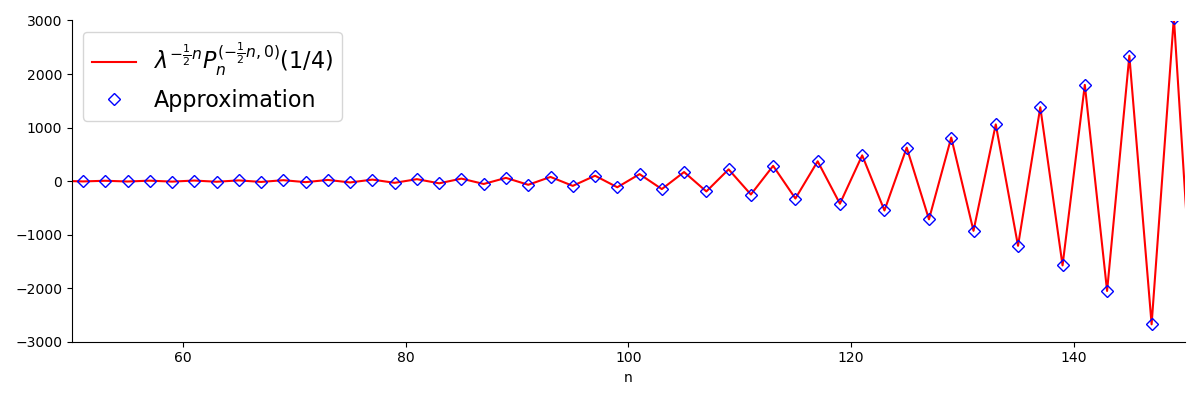}
\caption{$\lambda=\frac{1}{4}$, $a=-\frac{1}{2}<-\frac{2}{5}$, $\alpha=\beta=0$}
\end{subfigure}
\begin{subfigure}{\textwidth}
\centering
\includegraphics[width=0.9\linewidth]{./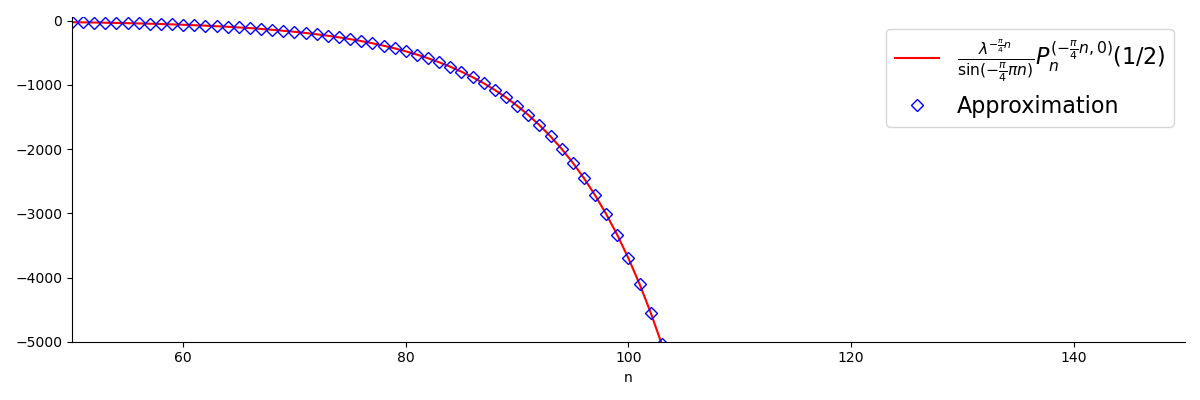}
\caption{$\lambda=\frac{1}{2}$, $a=-\frac{\pi}{4}<-\frac{2}{3}$, $\alpha=\beta=0$}
\end{subfigure}
\centering
\caption{Illustration of Theorem \ref{gbar}, point (4). Numerical computation of $n$-dependence of JPVFPs and approximation in the exponential region. Data points are displayed only for such $n$ that $an+\alpha$ is not an integer.}
\label{TheoremRegion4}
\end{figure}

\newpage
\section{Extensions}

This section illustrates two possible extensions of our work. \label{sec:Final-remarks} 

\subsection{Jacobi polynomials with varying parameters}

The proof of Lemma~\ref{striking} can be adapted to the more general
JPVPs by integrating the function 
\[
f:z\mapsto z^{(a+1)n+\alpha}\frac{(1-\lambda z)^{(b+1)n+\beta}}{(z-\lambda)^{n+1}}
\]
along the same contour as in Lemma~\ref{striking}. One easily obtains
an integral representation for the JPVPs of the form 
\begin{align*}
 & \lambda^{an+\alpha}(1-\lambda^{2})^{bn+\beta}P_{n}^{(an+\alpha,\,bn+\beta)}(1-2\lambda^{2})\\
 & =\frac{1}{\pi}\Re\left\{ \int_{0}^{\pi}{z^{\alpha+1}\frac{(1-\lambda z)^{\beta}}{z-\lambda}\bigg(z^{a+1}\frac{(1-\lambda z)^{b+1}}{z-\lambda}\bigg)^{n}}\Bigg|_{z={x}e^{i\varphi}}\d\varphi\right\} \\
 & -\frac{{\sin\left(\pi(\alpha+an)\right)}}{\pi}\int_{0}^{{x}}\frac{(1+\lambda t)^{\beta}t^{\alpha}}{t+\lambda}\left(t^{(a+1)}\frac{(1+\lambda t)^{b+1}}{t+\lambda}\right)^{n}{\rm d}t,
\end{align*}
where $x\in(\lambda,\,1/\lambda)$. It should be mentioned that in
this case the asymptotic analysis of the first integral is significantly
more involved. Some asymptotic formulas can be obtained by applying the {method
of steepest descent} along the lines of~\cite[Section 2]{RW}, \cite[Chapter 7]{BH}.
This leads to the cases 
\begin{enumerate}
\item $a\in\left(\frac{(1-\lambda\sqrt{b+1})^{2}}{1-\lambda^{2}}-1,\,\frac{(1+\lambda\sqrt{b+1})^{2}}{1-\lambda^{2}}-1\right)$, 
\item $a\in\left\{ \frac{(1-\lambda\sqrt{b+1})^{2}}{1-\lambda^{2}}-1,\,\frac{(1+\lambda\sqrt{b+1})^{2}}{1-\lambda^{2}}-1\right\} ,$ 
\item $a\in\left(\frac{(1+\lambda\sqrt{b+1})^{2}}{1-\lambda^{2}}-1,\,\infty\right),$ 
\item $a\in\left(-1,\,\frac{(1-\lambda\sqrt{b+1})^{2}}{1-\lambda^{2}}-1\right)$
and $an+\alpha$ is an integer, 
\item $a\in\left(-1,\,\frac{(1-\lambda\sqrt{b+1})^{2}}{1-\lambda^{2}}-1\right)$
and $an+\alpha$ is not an integer. 
\end{enumerate}
For instance if $a\in\left(\frac{(1-\lambda\sqrt{b+1})^{2}}{1-\lambda^{2}}-1,\,\frac{(1+\lambda\sqrt{b+1})^{2}}{1-\lambda^{2}}-1\right)$
then
\begin{align*}
&\lambda^{an+\alpha}(1-\lambda^{2})^{bn+\beta}P_{n}^{(an+\alpha,\,bn+\beta)}(1-2\lambda^{2})=\\
&\sqrt{\frac{2}{n\pi}}\Re\left\{ \left(G(z_{+})\left(z_{+}^{2}iH''(z_{+})\right)^{-1/2}\right)e^{nH(z_{+})+i\frac{\pi}{4}}\right\} \left(1+\cO\left(\frac{1}{n^{1/2}}\right)\right),
\end{align*}
where 
\[
G(z)=\frac{z^{\alpha+1}(1-\lambda z)^{\beta}}{z-\lambda},\qquad H(z)=\Log\left(\frac{z^{a+1}(1-\lambda z)^{b+1}}{z-\lambda}\right)
\]
and 
\[
z_{\pm}=\frac{\lambda^{2}(a+b+2)+a}{2\lambda(a+b+1)}\pm\sqrt{\left(\frac{\lambda^{2}(a+b+2)+a}{2\lambda(a+b+1)}\right)^{2}-\frac{a+1}{a+b+1}}
\]
satisfy $H'(z_{\pm})=0$ and $H''(z_{\pm})\neq0$, which generalizes
Theorem \ref{gbar} point (1). This formula coincides with~\cite[Proposition 6.1]{FFN}.
Similar to Theorem~\ref{gbar}, point 3), if $a\in\left(\frac{(1+\lambda\sqrt{b+1})^{2}}{1-\lambda^{2}}-1,\,\infty\right)$
and $a+b+1>0$ then the Laplace-type integral does not contribute,
leading to 
\begin{align*}
 & \lambda^{an+\alpha}(1-\lambda^{2})^{bn+\beta}P_{n}^{(an+\alpha,\,bn+\beta)}(1-2\lambda^{2})\\
 & =\sqrt{\frac{1}{2n\pi H''(z_{-})}}\frac{G(z_{-})}{z_{-}}\left(\frac{z_{-}^{a+1}(1-\lambda z_{-})^{b+1}}{z_{-}-\lambda}\right)^{n}\left(1+\cO\left(\frac{1}{n^{1/2}}\right)\right).
\end{align*}
However, if $a\in\left(-1,\,\frac{(1-\lambda\sqrt{b+1})^{2}}{1-\lambda^{2}}-1\right)$
the condition that {$an+\alpha$ is an integer} plays a similar role as for $b=0$. If $a\in\left(-1,\,\frac{(1-\lambda\sqrt{b+1})^{2}}{1-\lambda^{2}}-1\right)$,
$a+b+1>0$ and $an+\alpha$ is an integer then 
\begin{align*}
 & \lambda^{an+\alpha}(1-\lambda^{2})^{bn+\beta}P_{n}^{(an+\alpha,\,bn+\beta)}(1-2\lambda^{2})\\
 & =-\sqrt{\frac{1}{2n\pi H''(z_{-})}}\frac{z_{-}^{(a+1)n+\alpha}(1-\lambda z_{-})^{(b+1)n+\beta}}{(z_{-}-\lambda)^{n+1}}\left(1+\cO\left(\frac{1}{n^{1/2}}\right)\right).
\end{align*}

\subsection{Uniform asymptotic expansions}

\label{uniform} The theory of uniform asymptotic expansions is concerned
with asymptotic expansions that hold when a parameter varies throughout different ranges of asymptotic behavior. We describe
briefly the methodology to obtain uniform expansions for JPVFPs when
$a$ passes over the critical boundaries $\{-\frac{2\lambda}{1+\lambda},\frac{2\lambda}{1-\lambda}\}$.
Luckily even in this situation standard tools are available \cite[Section 2.3]{Bo}, \cite[Section 9.2]{BH},
\cite[p. 366--372]{RW}, which are all based on the so-called \emph{uniform
method of steepest decent}~\cite{CFU}. 
For brevity consider the Fourier integral, see Section~\ref{sub:The-stationary-phase}.
When $a$ approaches the boundary, the radius of convergence of the
asymptotic expansion goes to $0$. Indeed when $a$ varies in $\mathbb{R}\setminus\{-\frac{2\lambda}{1+\lambda},\frac{2\lambda}{1-\lambda}\}$
the saddle points $z_{\pm}$ of $h_{a}$ are of order one, but when
$a$ approaches the critical boundary $z_{\pm}$ coalesce to saddle
points of order $2$. If $a$ approaches the boundary from the inside
the two saddle points $z_{\pm}$ remain on the unit circle $\partial\mathbb{D}$.
But if $a$ approaches the boundary from the outside the saddle
points $z_{\pm}$ move along the real line. While in the former situation
$z_{\pm}$ automatically lie on the contour of integration, in the latter case the contour
of integration is deformed such that the new contour passes through
the saddle points $z_{\pm}$ and $1$. To simplify the dependence
of $z_{\pm}$ on $a$ a change of variable is applied via a locally
one-to-one transformation. This is made precise in the below proposition, where (overriding previous notation)
\[
f(z)=f_{a}(z)={\rm Log}\left(\frac{z^{a+1}(1-\lambda z)}{z-\lambda}\right).
\]

\begin{prop}[{{\cite[Proposition 16]{SZ1}}}]
\label{cubic} For $a$ in a small neighborhood
of $\frac{2\lambda}{1-\lambda}$ the cubic transformation 
\[
f_{a}(z)=-\frac{t^{3}}{3}+\gamma^{2}t
\]
with 
\[
\gamma^{2}=\frac{(a-\frac{2\lambda}{1-\lambda})(1-\lambda)}{\left(\lambda(1+\lambda)\right)^{1/3}}+{\scriptstyle\mathcal{O}}\left(a-\frac{2\lambda}{1-\lambda}\right),
\]
has exactly one branch $t=t(z,a)$ that can be expanded into a power
series in $z$ with coefficients that are continuous in $a$. On this
branch the points $z=z_{\pm}$ correspond to $t=\pm\gamma$. The mapping
of $z$ to $t$ is one-to-one on a small neighborhood of $z=1$ containing
$z_{+}$ and $z_{-}$. 
\end{prop}

This is an immediate corollary of~\cite{CFU}. A proof can be found
in the appendix of \cite{SZ2}. To determine the asymptotics of the
Fourier integral over a new suitable contour $\cC$
we apply the transformation to a neighborhood of $z_{0}=1$. This
yields a uniform expansion of the integral in terms of the Airy function
$Ai$. For real arguments the latter can be defined as 
\[
Ai(x)=\frac{1}{\pi}\int_{0}^{\infty}\cos\left(\frac{t^{3}}{3}+xt\right)\d t.
\]
For large negative arguments the Airy function shows oscillatory behavior
\[
Ai(-x)=\frac{1}{x^{1/4}\sqrt{\pi}}\left(\cos\left(\frac{2}{3}x^{3/2}-\frac{\pi}{4}\right)+{\scriptstyle \mathcal{O}}(1)\right),\qquad x\rightarrow+\infty,
\]
\[
Ai'(-x)=\frac{x^{1/4}}{\sqrt{\pi}}\left(\sin\left(\frac{2}{3}x^{3/2}-\frac{\pi}{4}\right)+{\scriptstyle \mathcal{O}}(1)\right),\qquad x\rightarrow+\infty,
\]
%
%
and exponential behavior for large positive arguments 
\[
Ai(x)\sim\frac{1}{2x^{1/4}\sqrt{\pi}}\exp\left(-\frac{2}{3}x^{3/2}\right),\ Ai'(x)\sim-\frac{x^{1/4}}{2\sqrt{\pi}}\exp\left(-\frac{2}{3}x^{3/2}\right),\ x\rightarrow+\infty.
\]
Following the procedure described in \cite[p.~371--375]{BH} yields an asymptotic expansion of 
$\int_{\partial\mathbb{D}}g(z)e^{nf(z)}\frac{\d z}{z}$ for $a$ in a neighborhood of $\frac{2\lambda}{1-\lambda}$ in terms of $Ai$ and $Ai'$. Finally, if $an+\alpha$ is not an integer the Laplace integral must
be taken into account. In this case \cite{CFU} can be applied to $t\mapsto\log\left(t^{(a+1)}\frac{1+\lambda t}{t+\lambda}\right)$,
since as $a$ approaches $a_{0},$ its critical points 
\[
t_{\pm}=-\frac{\lambda^{2}(a+2)+a}{2\lambda(a+1)}\pm\sqrt{\left(\frac{\lambda^{2}(a+2)+a}{2\lambda(a+1)}\right)^{2}-1},
\]
coalesce to $t_{0}=1.$

\end{document}